\documentclass[11pt]{amsart}
\usepackage[active]{srcltx}
\usepackage{mathrsfs}
\usepackage[T1]{fontenc}

\usepackage{latexsym,enumerate}
\usepackage{amsmath,amssymb,amsthm,amsfonts,latexsym}
\usepackage[bookmarks]{hyperref}

\hypersetup{backref, colorlinks=true}
\hypersetup{backref, colorlinks=true, colorlinks   = true,
	urlcolor     = blue, linkcolor = blue, citecolor   = magenta
}


\def\adh#1{\overline{#1}}

\setlength{\textwidth}{150mm} \setlength{\textheight}{215mm}

\setlength{\oddsidemargin}{.25in}

\setlength{\evensidemargin}{.25in} \setlength{\topmargin}{-0.2cm}

\setlength{\parskip}{.05in} \setlength{\hoffset}{-0.4cm}

\setlength{\headheight}{12pt} \setlength{\headsep}{25pt}

\let\=\partial

\newtheorem {pro}{Proposition}[section]
\newtheorem {thm}[pro]{Theorem}
\newtheorem{lem}[pro]{Lemma}

\theoremstyle{definition}
 \newtheorem {rem}[pro]{Remark}
\newtheorem {dfn}[pro]{Definition}

\newtheorem {exa}[pro]{Example}

\newtheorem {step}{Step}
\newtheorem {claim}{Claim}



\newcommand{\nen}{{\bar{n}}}
%
\newcommand{\tim}{{t\in \R^m}}
\newcommand{\nub}{\mathbf{\nu}}

\newcommand{\gab}{\mathbf{\gamma}}
\newcommand{\gub}{\gab_\nub}
\newcommand{\dvv}{{\mbox{div}}}

\newcommand{\s}{\mathcal{S}}

\newcommand{\cbt}{\tilde{\bf C}}
\newcommand{\cbf}{{\bf C}}

\newcommand{\jac}{\mbox{jac}\,}
\newcommand{\tra}{\mathbf{tr}}

\newcommand{\R}{\mathbb{R}}

\newcommand{\N}{\mathbb{N}}
\newcommand{\cc}{\mathscr{C}}\newcommand{\C}{\mathcal{C}}

\newcommand{\et}{\quad \mbox{and} \quad }

\newcommand{\hn}{\mathcal{H}}

\newcommand{\omd}{{\Omega}}

\newcommand{\wbf}{\mathbf{W}}

\newcommand{\lbf}{\mathbf{L}}

\newcommand{\D}{\mathcal{D}}

\newcommand{\ep}{\varepsilon}

\newcommand{\pa}{\partial}

\newcommand{\hh}{\mathcal{V}}

\newcommand{\bou}{\mathbf{B}}
\newcommand{\sph}{\mathbf{S}}
\newcommand{\orn}{{0_{\R^n}}}
\newcommand{\supp}{\mbox{\rm supp}}
\newcommand{\xo}{{x_0}}
\newcommand{\stt}{\,\tilde{*}\,}

\title[]{$W^{1,p}$ priori estimates for solutions of linear elliptic PDEs on subanalytic domains}

\setcounter{section}{-1}
\makeatletter
 \thanks{Research partially supported
by the NCN grant  2021/43/B/ST1/02359.}
\@namedef{subjclassname@2020}{%
\textup{2020} Mathematics Subject Classification}
\makeatletter

\@addtoreset{equation}{section}

\makeatother

\author[ G. Valette]{ Guillaume Valette}

\address[G. Valette]{Instytut Matematyki Uniwersytetu
Jagiello\'nskiego, ul. S. \L ojasiewicza 6, Krak\'ow, Poland}\email{guillaume.valette@im.uj.edu.pl}


\keywords{Elliptic PDE, subanalytic domain, singular domain,  a priori estimate, tangent cone, stratification, \L ojasiewicz's inequality}


\subjclass[2020]{35B45, 35A21, 32B20, 14P10}

\begin{document} \maketitle
\begin{abstract}
	We prove a priori estimates for solutions of order $2$ linear elliptic PDEs in divergence form on subanalytic domains. More precisely, we study the solutions of a strongly elliptic equation $Lu=f$, with $f\in L^2(\omd)$ and $Lu=\mbox{div} (A(x) \nabla u)$, and, 	given a bounded subanalytic domain $\omd$, possibly admitting non metrically conical singularities within its boundary, we provide explicit conditions on the tangent cone of the singularities of the boundary which ensure that  $||u||_{ W^{1,p}(\omd)}\le C||f||_{L^2(\omd)}$,  for some $p>2$. 		 The number $p$ depends on the geometry of the singularities of $\delta \omd$, but not on $u$.

	\end{abstract}
	\section{Introduction}
	As well-known, the singularities of the boundary of a domain significantly affect the level of integrability of the solution of an elliptic partial differential equation. In this article, given a bounded subanalytic domain $
	\omd$ of $\R^n$, we prove a priori estimates for solutions of elliptic equations in divergence form
	\begin{equation}\label{eq_L}
		L	u:=\mbox{div} (A(x) \nabla u) 	=f,
	\end{equation}
	where $A\in W^{1,\infty}(\omd)^{n\times n}$ satisfies   for some $\lambda_0>0$,
	\begin{equation}\label{eq_ellipticity}
		\lambda_0|\xi|^2\le \sum_{i,j=1} ^n A_{ij}(x)\xi_i\xi_j,\qquad \forall \xi \in \R^n, \;\forall x\in  \omd .
	\end{equation}
 
 Subanalytic sets  are the  images of semi-analytic sets under proper analytic morphisms  \cite{bm,livre},  semi-analytic sets being the subsets of $\R^n$  locally defined by finitely many equalities and inequalities on  analytic functions.
  Subanalytic domains may admit very singular boundary and it is easy to produce examples where the cone property fails. Recent works on the theory of Sobolev spaces of subanalytic sets \cite{poincfried, poincwirt, trace, lprime, gupel} however underline that they constitute a well-behaved category to investigate PDEs, and the level of generality that they offer makes them very attractive for this study.

 We investigate in this article the $W^{1,p}$ regularity of a solution $u\in W^{1,2}(\omd)$ of (\ref{eq_L}) under Dirichlet boundary condition. We show that $u\in W^{1,p}(\omd)$ for some $p>2$ under an assumption on the tangent cone of the boundary or the complement of $\omd$ (Theorem \ref{main}). The problem of the continuity of the solution of Poisson equation was studied in \cite{kai} using the Perron method, who showed that the solution is continuous under some slightly weaker condition on the tangent cone of the complement, that was also proved  to be necessary.

 Our approach rather focuses on the theory of Sobolev spaces.
 We generalize well-known facts on Lipschitz domains \cite{jerker}, on which the theory is already intricate. If it is true that singularities have a significant effect on the regularity of solutions, it is on the other hand an interesting problem to study the extent to which the geometry of the singularities is related to the singularity of the solution, and the present article emphasizes that  results achieved by singularists during the seven last decades can provide information on this issue.   For instance, the number $p>2$ provided by Theorem \ref{main} is closely related to the geometry of the domain and inward horns that may arise.
  The notion of tangent cone, introduced by singularists, has thus  much to do with this issue, and many results of singularity theory, like \L ojasiewicz's inequality \cite{loj}, existence of Verdier regular stratifications \cite{ver}, or normal pseudo-flatness of stratified spaces \cite{hir, orrtro} (more precisely, the closely related formulas (\ref{eq_lim_f_t}) and (\ref{eq_trivialite_cone_normal})), prove useful for our purpose. Uniform finiteness \cite[Corollary $1.8.12$]{livre} \cite[Theorem $2.9$]{cos}  provides bounds for $L^p$ norms (see section \ref{sect_pf_vshort}).
  
   The purpose of the present article is therefore also  to stress the interplay between singularity theory and the theory of PDEs. One advantage of working in the subanalytic category is that  it includes all the sets defined by finitely many inequalities on polynomials. That the condition provided by our theorem only bears on tangent cones makes it possible to give  effective local criteria to identify  points of the boundary at which the solution is regular, as the tangent cone often only depends on the initial form of the equation of $\delta \omd$.  We give an example of application to the  study of the function $u:\bou_{\R^n}(0,1)\to \R$, where $\bou_{\R^n}(0,1)$ is the ball of radius $1$ in $\R^n$, realizing the minimum
   \begin{equation}\label{eq_argmin}
   	 \min \; \{ ||\nabla u||_{L^2(\bou_{\R^n}(0,1))}: u(x)=g(x) \mbox{ at all $x\in \adh{\bou_{\R^n}(0,1)}$ satisfying } P(x)=0\},
   \end{equation}
   when $P$ is a polynomial and $g\in W^{2,2}(\bou_{\R^n}(0,1))$
    (see Examples \ref{exa1}, \ref{exa2}).

	\section{Some notations and conventions}
	Throughout this article, the letter $\omd$ stands for a bounded subanalytic open subset of $\R^n$. We will use the following notations.
	 
	\begin{itemize}
	 \item 	 $\bou_A(x,\ep)$, open  ball in $A\subset \R^n$ of radius $\ep$   centered  at $x$ (for the  euclidean distance).  To simplify notations, we will however
	omit the subscript $A$ when obvious from the context.

		\item $\sph(x,\ep)$, sphere    of radius  $\ep>0$ centered at   $x\in  \R^n$. As customary, $\sph(0_{\R^n},1)$ will however be denoted $\sph^{n-1}$, $0_{\R^n}$ being the origin of $\R^n$.
		 
		 	\item 	$x\cdot y$  and $|x|$,  euclidean inner product of $x$ and $y$ and euclidean norm of $x$.
		 
		 \item 	$\adh A$, closure of a set $A\subset \R^n$.
		 \item $\pa \omd$, regular locus of $\delta \omd:= \adh \omd \setminus \omd$ (section \ref{sect_sub}).

	\item $D_xh$   derivative of a mapping $h$ at a point $x$.  As customary, we however write $h'(x)$ whenever $h$ is a one-variable function.

	\item 	$r \cdot A$, image of $A\subset \R^n$ under the mapping $x\mapsto rx$, if $r\in \R$.

		\item $<u,v>$, $L^2$-inner product of $\omd$, i.e. $\int_\omd uv$. 
	
	\item 	$\nabla u$ and $\dvv\, v$, gradient of $u:\omd \to \R$ and divergence of $v:\omd\to \R^n$.


	\item 	Given   $p\in [1,\infty)$, $W^{1,p}(\omd)$ stands for the Sobolev space of $\omd$, i.e. $$W^{1,p}(\omd):= \{u\in L^p(\omd),\; |\nabla u| \in L^p(\omd)\},$$ and, given an open subset $Z$ of $\pa \omd$,
	$$W^{1,p}(\omd,Z):= \{u\in W^{1,p}(\omd), \tra_{\pa \omd} u=0\; \mbox{on $Z$}\},$$
	where $\tra_{\pa \omd} $ is the trace operator defined in section \ref{sect_PDEs}. Since the elements of  $W^{1,p}(\omd,\pa \omd)$ can  be extended by $0$ to elements of $W^{1,p}(\R^n)$ \cite[Proposition $4.5$]{lprime}, we will sometimes regard them as defined on $\R^n$. 

	\item 	$\supp\, u$,  support of a distribution $u$ on $\omd$.

	\item 	$\s_n$, set of globally subanalytic subsets of $\R^n$ (section \ref{sect_sub}).

\item 	$\D^+(S)$, set of positive definable continuous functions on $S\subset \R^n$ (section \ref{sect_sub}).
	
	\item 	$A_t$, fiber of $A\in \s_{m+n}$ at $\tim$ (section \ref{sect_pf_vshort}).

	  	\item 	$\cbf_t(A)$ (resp. $\cbf^S_t(A)$), tangent cone (resp. normal cone) of $A\subset \R^n$ at $t\in A$ (section \ref{sect_a_priori}, resp. section \ref{sect_normal_cone}). 	$\cbt_t(A)$ (resp. $\cbt^S_t(A)$) will stand for the link of the tangent (resp. normal) cone (section \ref{sect_a_priori}, resp. section \ref{sect_normal_cone}).
	  
	\item 	$d(x,S)$,  euclidean distance from $x\in \R^n$ to  $S\subset \R^n$. The function $x\mapsto d(x,S)$ is  denoted $d(\cdot, S)$.

	 	\item   If $S$ is a definable $\cc^\infty$ submanifold of $\R^n$, then there is a definable  neighborhood $U_S$ of $S$ on which  $d(\cdot,S)^2$ is $\cc^\infty$ and there is a $\cc^\infty$ retraction $\pi_S:U_S\to S$ which assigns to every $x\in U_S$ the unique point that realizes $d(x,S)$, i.e. $|x-\pi_S(x)|=d(x,S)$ (see for instance \cite[Proposition $2.4.1$]{livre}).
	  We then set for $\delta\in \D^+(S)$
	  \begin{equation}\label{eq_tubular}
	  	U_S^\delta :=\{x\in \R^n :d(x,S)<\delta( \pi_S(x))\}.
	  \end{equation}
We will also write $U_S^\delta$ when $\delta$ is a positive number, identifying it with the constant function.
	
		\item $\hn^k$ and $d_\hn(\cdot, \cdot)$, $k$-dimensional Hausdorff measure and Hausdorff distance.
	
	 	\item  Given two nonnegative functions $\xi$ and $\zeta$ on a set $E$ as well as a subset $Z$ of $E$, we write ``$\xi\lesssim \zeta$ on $Z$'' or  ``$\xi(x)\lesssim \zeta(x)$ for $x\in Z$'' when there is a constant $C$ such that $\xi(x) \le C\zeta(x)$ for all $x\in Z$. 

	\item Given a  mapping $h:M\to M'$ between differentiable submanifolds,  $M\subset\R^n$ and $M'\subset \R^k$, differentiable at $x\in M$, with $\dim M \ge \dim M'$, we write $\jac h(x)$ for the (generalized) jacobian  $\sqrt{\det D_xh D_xh^\mathbf{t}}$, where $D_x h^\mathbf{t}$ denotes the transposite of $D_xh$. We   recall the  {\bf co-area formula}, which asserts that when
	 $h$ is Lipschitz and $m:=\dim M\ge m':=\dim M'$  we have for $u\in L^1(M)$:
	\begin{equation}\label{eq_coarea}
		\int_{y \in M'}\left( \int_{h^{-1}(y)}u(x)\, d\hn^{m-m'}(x) \,\right) d y  =\int_{x\in M } u(x)\,\jac h(x)\, d x .
	\end{equation}
\end{itemize}
	
	\section{The main result}\label{sect_main}
\subsection{Subanalytic sets.}\label{sect_sub}	We refer the reader to  \cite{bm, ds, livre} for all the basic facts about subanalytic geometry. 

	\begin{dfn}\label{dfn_semianalytic}
		A subset $E\subset \R^n$ is called {\bf semi-analytic} if it is {\it locally}
		defined by finitely many real analytic equalities and inequalities. Namely, for each $a \in   \R^n$, there are
		a neighborhood $U$ of $a$ in $\R^n$, and real analytic  functions $f_{ij}, g_{ij}$ on $U$, where $i = 1, \dots, r, j = 1, \dots , s_i$, such that
		\begin{equation}\label{eq_definition_semi}
			E \cap   U = \bigcup _{i=1}^r\bigcap _{j=1} ^{s_i} \{x \in U : g_{ij}(x) > 0 \mbox{ and } f_{ij}(x) = 0\}.
		\end{equation}

		The flaw of the  semi-analytic category is that  it is not preserved by analytic morphisms, even when they are proper. To overcome this problem, we prefer working with the  subanalytic sets.

		A subset $E\subset \R^n$  is  {\bf  subanalytic} if
		each point $x\in\R^n$ has a neighborhood $U$ such that $U\cap E$ is the image under the canonical projection $\pi:\R^n\times\R^k\to\R^n$ of some relatively compact semi-analytic subset of $\R^n\times\R^k$ (where $k$ depends on $x$).

		A subset $Z$ of $\R^n$ is  {\bf globally subanalytic} if $\hh_n(Z)$ is   subanalytic, where $\hh_n : \R^n  \to (-1,1) ^n$ is the homeomorphism defined by $$\hh_n(x_1, \dots, x_n) :=  (\frac{x_1}{\sqrt{1+|x|^2}},\dots, \frac{x_n}{\sqrt{1+|x|^2}} ).$$

		We say that {\bf a mapping $f:A \to B$ is   globally subanalytic}, $A \subset \R^n$, $B\subset \R^m$ globally subanalytic, if its graph is a   globally subanalytic subset of $\R^{n+m}$. For simplicity, globally subanalytic sets and mappings will be referred as {\bf definable} sets and mappings.
We denote by $\s_n$ the set of definable subsets of $\R^n$ and by $\D^+(S)$, $S\in \s_n$, the space of positive continuous definable functions on $S$.
\end{dfn}

The globally subanalytic category is well adapted to our purpose.  It is stable under intersection, union, complement, and projection. It thus constitutes an o-minimal structure \cite{vdd, cos}, and consequently admits cell decompositions (see \cite[Definition $2.4$]{cos} \cite[Definition $1.2.1$]{livre}) and stratifications (Definition \ref{dfn_stratifications}), from which it follows that definable sets enjoy a large number of finiteness properties (see \cite{cos,livre} for more).

Given $A\in \s_n$, we denote by $A_{reg}$  the set of points of $A$ at which $A$ is a $\cc^\infty$ submanifold of $\R^n$ of dimension $\dim A$. It follows from Tamm's theorem \cite{tamm} that this set is definable, and it easily follows from existence of cell decompositions \cite[Theorem $1.2.3$]{livre} that it  satisfies $\dim A \setminus A_{reg}<\dim A$.   We set  $\pa \omd:=(\delta \omd)_{reg}$, where $\delta \omd :=\adh \omd\setminus\omd$.

\subsection{PDEs on subanalytic domains.}\label{sect_PDEs}

We will investigate equation (\ref{eq_L}) under mixed boundary conditions
\begin{equation}\label{eq_boudary_conditions}
	\tra_{\pa \omd} u=0 \mbox{ on } \omd_D,\quad  \gub \nabla u =\theta \mbox{ on } \omd_N,
\end{equation}
where $\omd_D \ne \emptyset$ and $\omd_N$ are two open subanalytic subsets of $\pa \omd$ covering a dense part of this set, and $\theta$ is a function on $\pa \omd$. Here $\tra_{\pa \omd}$ denotes the trace operator on $\pa \omd$ and $\gub \nabla u$ stands for the inner product of $\nabla u$ with a unit normal vector.
 Since there is no regularity assumption on $\omd$ except subanalicity, (\ref{eq_boudary_conditions}) needs some clarification.

 Let us first define the trace operator on $\pa \omd$. At a point of $\pa \omd$, the germ of $\omd$ has one or two connected components which are smooth domains.  Therefore, the restriction of every $u\in W^{1,p}(\omd)$, $p\in [1,\infty)$,  leaves a trace on $\pa \omd$, which is locally $L^p$. Hence, we can define a trace operator $\tra_{\pa \omd}:W^{1,p}(\omd)\to L^p_{loc}(\pa \omd)^2$, which assigns to every $u$ its  traces provided by the respective restrictions of $u$ to the  local connected components of $\omd$ (when $\omd$ is connected near a point, the second component of the trace is by convention $0$, see \cite{trace, lprime} for more).  This mapping indeed depends on the way the  connected components are enumerated, but we assume this choice to be made once for all, as a change of enumeration anyway just induces a permutation of the components of $\tra_{\pa M}$.
 Given an open subset $Z$ of $\pa \omd$, we then set $$W^{1,p}(\omd,Z):=\{u\in W^{1,p}(\omd):\tra_{\pa \omd} u =0 \mbox{ on } Z\}.$$
 It was shown in \cite[Theorem $4.1$]{lprime} that the elements of this space can be approximated by smooth functions (up to the boundary) vanishing in the vicinity of $\adh Z$ for all $p\in [1,2]$. In particular, $\cc_0^\infty(\omd)$ is dense in the kernel of the trace operator for such $p$.  
 It is also worthwhile mentioning that by Proposition $4.5$ of \cite{lprime}, the elements of  $W^{1,p}(\omd,\pa \omd)$ can be extended by $0$ to elements of $W^{1,p}(\R^n)$ for all $p$. We will thus sometimes regard them as defined on $\R^n$.

 The operator $\gub$ can  be defined in a similar way. Let $\wbf_{\!\! \dvv}^{1,p'}(\omd)$ stand for the space of vector fields $\beta\in L^{p'}(\omd)^n$ satisfying $\dvv \beta \in L^{p'}(\omd)$ ($p\in [1,\infty]$, $1/p+1/p'=1$). At a point $x$ of $ \pa \omd$, this set admits a tangent space. Choose a unit normal vector $\nu(x)$, and set for $\beta \in W^{1,\infty}(\omd)^n$, when $\omd$ is connected near $x$, $\gub \beta :=\nu\cdot \beta \in L^\infty(\pa \omd)$. Again, when there are two connected components locally near $x$, one has to do it for the restriction of $u$ to every connected component and one gets a couple of elements of $L^\infty(\pa \omd)$ (again the second component of $\gub$ is zero whenever $\omd$ is connected near $x$, we also need to pay attention to orientation, see \cite{gupel} for more).

 So far, we defined $\gub$ on the space $ W^{1,\infty}(\omd)^n$ and, as in the the case of regular domains, we need a density theorem to extend it to $\wbf_{\!\! \dvv}^{1,p'}(\omd)$.
This was carried out for $p\in (1,2]$ in \cite{gupel}, where the theorem below was established.
 Of course, like the mapping $\tra_{\pa \omd}$, the obtained mapping $\gub:\wbf_{\!\! \dvv}^{1,p'}(\omd)\to W^{-1/p',p'}_{loc}(\pa \omd)$,  where $W^{-1/p',p'}_{loc}(\pa \omd)$ stands for  the dual of the image of the trace operator, depends on the way the two connected are enumerated, and the theorem below requires that coherent choices are made with those we made for $\tra_{\pa \omd}$.

	 \begin{thm}\label{thm_trace_formula}
	 	For every $p\in (1, 2]$,  $\gub$ uniquely extends to a continuous linear operator $\gub:\wbf_{\!\! \dvv}^{1,p'}(\omd)\to W^{-1/p',p'}_{loc}(\pa \omd)$,
	 	such that for all  $\beta\in \wbf_{\!\!\dvv}^{1,p'}(\omd)$ and
	 	$u\in W^{1,p}(\omd)$:
	 	\begin{equation}\label{eq_trace_theorem}
	 		<\beta, \nabla u>+<\dvv\, \beta,u>=<\gab_\nub \beta,\tra_{\pa \omd} u>.
	 	\end{equation}
	 \end{thm}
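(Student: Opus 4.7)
The plan is to proceed by density. The operator $\gab_\nub$ already makes pointwise sense on the subspace $W^{1,\infty}(\omd)^n \subset \wbf_{\!\! \dvv}^{1,p'}(\omd)$, so the argument reduces to two ingredients: verifying (\ref{eq_trace_theorem}) for Lipschitz vector fields, and a density statement for Lipschitz vector fields in $\wbf_{\!\! \dvv}^{1,p'}(\omd)$ in the graph norm $||\beta||_{L^{p'}}+||\dvv\, \beta||_{L^{p'}}$.

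For the first ingredient, fix $\beta \in W^{1,\infty}(\omd)^n$. When $u \in \cc_0^\infty(\omd)$, identity (\ref{eq_trace_theorem}) reduces to integration by parts on $\R^n$, both sides being zero; by the density of $\cc_0^\infty(\omd)$ in $W^{1,p}(\omd,\pa \omd)$ recalled in the excerpt, it continues to hold whenever $\tra_{\pa \omd} u =0$. For general $u\in W^{1,p}(\omd)$, I would localize near a regular point $x_0\in \pa \omd$: the germ of $\omd$ there consists of one or two smooth components, on each of which the classical Green's formula applies, producing the boundary term $\nu\cdot \beta$ paired with the corresponding component of $\tra_{\pa \omd} u$. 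A subanalytic partition of unity subordinated to $\pa \omd$ globalizes the identity; the ignored set $\delta \omd\setminus\pa \omd$ has Hausdorff $(n-1)$-measure zero, hence contributes nothing. The same computation shows that for fixed $\beta\in W^{1,\infty}(\omd)^n$, the map $u\mapsto <\beta,\nabla u>+<\dvv\, \beta,u>$ factors through $\tra_{\pa \omd}$ and defines an element $\gab_\nub \beta \in W^{-1/p',p'}_{loc}(\pa \omd)$, with a norm bound depending only on $||\beta||_{L^{p'}}+||\dvv\, \beta||_{L^{p'}}$.

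The principal obstacle is then the density of $W^{1,\infty}(\omd)^n$ in $\wbf_{\!\! \dvv}^{1,p'}(\omd)$, which cannot rely on any cone property. I would tackle it using the subanalytic structure of $\delta \omd$: given a Verdier regular stratification of $\delta \omd$ and a subordinated subanalytic partition of unity, reduce to approximating $\beta$ on a tubular neighborhood $U_S^\delta$ of each stratum $S$ by smooth compactly supported vector fields in $\omd$, via a combination of truncation, an inward translation (exploiting the tangent cone of $\omd$ along $S$ through the retraction $\pi_S$), and mollification. Showing that the truncation and translation errors go to zero in the $L^{p'}$-norm of both $\beta$ and $\dvv\, \beta$ is the delicate point; the restriction $p'\ge 2$ matches the range in which the companion density result \cite[Theorem $4.1$]{lprime} for $W^{1,p}(\omd,Z)$ is available, and presumably enters through a duality argument. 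Once this density is established, the extension of $\gab_\nub$ and the validity of (\ref{eq_trace_theorem}) in full generality follow by a standard functional-analytic closure, with uniqueness automatic from continuity and density.
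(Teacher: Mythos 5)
The paper does not supply a self-contained proof of this theorem: it is imported from the author's earlier paper on the Laplace equation, and the only proof indication in the present manuscript is the sentence following the statement, which says the theorem is derived from the density result \cite[Theorem~3.5]{gupel} approximating elements of $\wbf_{\!\!\dvv}^{1,p'}(\omd)$, $p\in(1,2]$, by Lipschitz vector fields \emph{vanishing near the singularities}. Your overall strategy --- establish (\ref{eq_trace_theorem}) on a dense subclass for which the formula reduces to classical Green's identities, then close up in the graph norm --- is precisely the strategy the paper invokes, and you correctly identify the density theorem as the crux and as the source of the restriction $p\le 2$.

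There is, however, a material imprecision in how you formulate the density step, and it infects your first ingredient as well. You ask for the density of $W^{1,\infty}(\omd)^n$ in $\wbf_{\!\!\dvv}^{1,p'}(\omd)$, and in the sketch even speak of approximating by ``smooth compactly supported vector fields in $\omd$''. Both versions are off. Compactly supported fields have identically zero normal trace, so if they were dense in the graph norm then by continuity $\gab_\nub$ would vanish on all of $\wbf_{\!\!\dvv}^{1,p'}(\omd)$ --- just as $\cc_0^\infty(\omd)^n$ is \emph{not} dense in the divergence space on a Lipschitz domain. And for a merely Lipschitz $\beta$ that does not vanish near the singular locus $\delta\omd\setminus\pa\omd$, your verification of (\ref{eq_trace_theorem}) is not as elementary as you present it: $\pa\omd$ is an open, possibly non-compact manifold, $\tra_{\pa\omd}u$ is only \emph{locally} $L^p$ on it, and $\nu\cdot\beta$ is merely $L^\infty$, so the right-hand pairing $<\gab_\nub\beta,\tra_{\pa\omd}u>$ need not be absolutely convergent; observing that $\delta\omd\setminus\pa\omd$ has $\hn^{n-1}$-measure zero does not resolve this, since the obstruction is integrability of the boundary integrand as one approaches the singular set, not the omission of a null set. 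The class the paper actually cites is exactly calibrated to avoid these problems: Lipschitz fields whose support misses a neighborhood of $\delta\omd\setminus\pa\omd$ but not of $\pa\omd$. For those, the boundary integral localizes to a compact subset of $\pa\omd$ and your Green's-formula argument is clean, and they still ``see'' the regular boundary so the normal trace is nontrivial. With that correction your reconstruction matches the cited argument.
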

	 This theorem is derived from a density result of the author  which enables us to approximate elements of $ \wbf_{\!\!\dvv}^{1,p'}(\omd)$, $p\in(1,2]$, by elements of $ W^{1,\infty}(\omd)^n$ vanishing near the singularities \cite[Theorem $3.5$]{gupel}.
 Although these  notions of traces only provide values of the functions and vector fields at smooth points of the boundary, formula (\ref{eq_trace_theorem}) makes it possible to derive the following weak formulation of problem (\ref{eq_L}) with mixed boundary conditions (\ref{eq_boudary_conditions}) when $\theta\in  W^{-1/2,2}_{loc}(\pa \omd)$: for all $\varphi\in \cc^\infty(\adh\omd)$ vanishing in the vicinity of $\delta \omd\setminus \overline{\omd_N}$ we have
\begin{equation}\label{eq_weak_formulation}
 <A\nabla u, \nabla \varphi>=<\theta, \nabla \varphi>-<f,\varphi> .
\end{equation}
Existence and uniqueness for all $f\in L^2(\omd)$ and $\theta\in  W^{-1/2,2}_{loc}(\pa \omd)$ (we assume $\omd_D\ne \emptyset$) then follow from Lax-Milgram's Theorem, together with Poincar\'e inequality for subanalytic domains \cite[Proposition $5.6$]{lprime}.
If $\omd_N=\emptyset$ then existence is guaranteed for all $f$ in $W^{1,2}(\omd,\pa \omd)'=W^{-1,2}(\omd)$ (see \cite[Proposition $4.11$]{lprime}), and we have
 for $u\in W^{1,2}(\omd,\pa \omd)$
\begin{equation}\label{eq_Delta_isom}
 ||u||_{W^{1,2}(\omd)}\lesssim ||L u||_{W^{-1,2}(\omd)}.
\end{equation}
Furthermore, it follows from \cite[Theorem $0.5$]{jerker} and \cite[Theorem $1$]{meyers} that, when the domain is Lipschitz (with $A$ possibly discontinuous), there is $p>2$ such that for  $u\in W^{1,2}(\omd,\pa \omd)$ we have
 \begin{equation}\label{eq_cpctly_supp_meyers}
 ||u||_{W^{1,p}(\omd)}\lesssim ||L u||_{L^{2}(\omd)}.
\end{equation}
 This remains true in any bounded domain $\omd$ (with non necessarily Lipschitz boundary)  if $u\in W^{1,2}(\omd )$ is compactly supported  for we can always replace $\omd$ with a big ball containing it and $u$ with its extension by $0$.
This article   provides a $W^{1,p}$  estimate at points at which Dirichlet's condition is required which is valid on any bounded subanalytic domain (Theorem \ref{main}).


\subsection{A priori estimates of the solutions.}\label{sect_a_priori}
	 Proving a priori estimates will require
	 assumptions on the tangent cone at singular points of the boundary.  Let us first define the link of the tangent cone (i.e. the section by the unit sphere) at a point $t\in X$ as
	$$\cbt_t(X):=\{v\in \sph^{n-1}:\exists (x_i)_{i\in \N} \to t \mbox{ in } X\setminus \{t\}  \mbox{ with } \frac{x_i-t}{|x_i-t|} \to v\}.$$
	The tangent cone is then defined as the (positive) cone over this set, i.e.: 
	$$\cbf_t(X):=\{\lambda v :\;\; \lambda\ge 0 \et v\in \cbt_t(X) \}.$$
	We will prove a priori estimates   under the assumption that for every $t\in \delta \omd$ 
	\begin{equation}\label{ass_cone_tangent}
		\hn^{n-1} (\cbt_t (\R^n\setminus \omd))\ge \alpha \quad \mbox{ or } \quad \hn^{n-2} (\cbt_t (\delta \omd))\ge \alpha,
	\end{equation}
where $\alpha$ is a positive real number independent of $t$. Namely:

\begin{thm}\label{main}
	Let a bounded subanalytic domain $\omd\subset \R^n$ satisfy (\ref{ass_cone_tangent})
	at each $t\in \delta \omd$ for some $\alpha>0$ independent of $t$. If  $L$ fulfills the ellipticity condition (\ref{eq_ellipticity}) then there are $p>2$ and a constant $C$ such that for every  $u\in W^{1,2}(\omd,\pa \omd)$:
	$$||u||_{W^{1,p}(\omd)} \le C||Lu||_{L^2(\omd)}.$$ 
\end{thm}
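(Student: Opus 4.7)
The plan is to adapt the classical Meyers--Gehring reverse H\"older strategy \cite{meyers} to the subanalytic setting, using hypothesis (\ref{ass_cone_tangent}) to supply a uniform local Poincar\'e inequality at singular boundary points. The steps are: (i) localize via a finite cover of $\adh \omd$; (ii) on balls meeting only the regular part $\pa \omd$ of the boundary or lying in the interior, invoke the Lipschitz-domain estimate (\ref{eq_cpctly_supp_meyers}) after a cut-off; (iii) on balls centered at singular points $t\in\Sigma:=\delta\omd\setminus\pa\omd$, combine a Caccioppoli inequality with a local Poincar\'e inequality to produce a reverse H\"older inequality for $|\nabla u|^2$; (iv) apply Gehring's lemma to raise the exponent.

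For (iii) I would establish, for $t\in \Sigma$ and small $r$, an inequality of the form
\begin{equation*}
\int_{\bou(t,r)\cap \omd}|v|^2 \le C_0\, r^2\int_{\bou(t,r)\cap \omd}|\nabla v|^2,\qquad v\in W^{1,2}(\omd,\pa \omd),
\end{equation*}
with $C_0$ and a threshold radius $r_0$ independent of $t$, where $v$ is identified with its extension by zero to $\R^n$. Under the first alternative of (\ref{ass_cone_tangent}), the hypothesis translates into a density estimate $\hn^{n}(\bou(t,r)\setminus \omd)\gtrsim \alpha\, r^n$, so $v$ vanishes on a set of positive relative measure in $\bou(t,r)$ and the Poincar\'e inequality follows by the standard argument. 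Under the second alternative, one gets $\hn^{n-1}(\bou(t,r)\cap \pa \omd)\gtrsim \alpha\, r^{n-1}$ (using that $\delta\omd\setminus\pa\omd$ has dimension less than $n-1$, so it carries no $\hn^{n-1}$-mass), and a trace-type Poincar\'e inequality applies since $\tra_{\pa \omd}v=0$. A standard Caccioppoli argument on concentric balls $\bou(t,r)\subset \bou(t,2r)$, combined with this Poincar\'e inequality and the ellipticity condition (\ref{eq_ellipticity}), then yields a reverse H\"older inequality for $|\nabla u|^2$ (with an $L^2$-term in $Lu$), and Gehring's lemma delivers the exponent $p>2$.

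The main obstacle is the uniformity of $C_0$ and $r_0$ as $t$ varies over $\Sigma$: the tangent cone hypothesis is pointwise, while the reverse H\"older step demands a uniform density bound at all sufficiently small finite scales. This is precisely where subanalytic geometry enters. The functions $(t,r)\mapsto \hn^{n}(\bou(t,r)\setminus \omd)$ and $(t,r)\mapsto \hn^{n-1}(\bou(t,r)\cap \delta\omd)$ form definable families (cf.\ \cite[Corollary $1.8.12$]{livre}, \cite[Theorem $2.9$]{cos}), so \L ojasiewicz's inequality \cite{loj} allows one to promote the pointwise lower bound on the tangent cone measure into a uniform lower bound at scales $r\le r_0$, with $r_0$ and the implied constant independent of $t\in \Sigma$. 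A finite subanalytic partition of unity subordinate to the cover finally assembles the local Meyers-type estimates into the global bound $\|u\|_{W^{1,p}(\omd)}\le C\|Lu\|_{L^2(\omd)}$.
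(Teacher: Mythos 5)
Your proposal takes a genuinely different route from the paper. You reach for a direct Meyers--Gehring reverse H\"older argument on balls, whereas the paper fixes a $(w)$-regular stratification $\Sigma$ of $\delta\omd$, proves Poincar\'e-type inequalities on spherical slices $V^\delta_\eta$ of tubular neighborhoods of strata (Proposition \ref{pro_vshort}), converts these into weighted $L^2$ estimates near each stratum via a Gr\"onwall-type integration (Lemma \ref{lem_short}), and then runs a decreasing induction on the codimension of strata, raising the exponent by H\"older interpolation against the compactly supported Meyers bound (\ref{eq_cpctly_supp_meyers}). The Gehring machinery appears in the paper only through the black-boxed Lipschitz estimate (\ref{eq_cpctly_supp_meyers}); the paper's gain in integrability is produced by interpolation and the weight functions $\rho_k$, not by a fresh application of Gehring on the singular domain.

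There is a genuine gap at the core of step (iii). You assert that (\ref{ass_cone_tangent}) ``translates into'' a density bound $\hn^n(\bou(t,r)\setminus\omd)\gtrsim\alpha r^n$ (first alternative) or a bound $\hn^{n-1}(\bou(t,r)\cap\pa\omd)\gtrsim\alpha r^{n-1}$ (second alternative), and that this delivers a local Poincar\'e or trace-Poincar\'e inequality with uniform constant. Neither step is automatic. The link $\lbf_{r,t}(\R^n\setminus\omd)$ converges to $\cbt_t(\R^n\setminus\omd)$ only in Hausdorff distance (see (\ref{eq_lim_f_fibre})), and Hausdorff convergence does not give lower semicontinuity of $\hn^{n-1}$; in particular the tangent-cone measure bound is not by itself a finite-scale measure bound. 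The paper bridges exactly this gap with Lemma \ref{lem_contient_boule}, which extracts from a definable family a ball of \emph{uniform radius} inside the approximating set whenever it is Hausdorff-close to a set of definite positive measure; combined with Proposition \ref{pro_ppn} (normal pseudo-flatness along strata, needed for uniformity in $t$), Lemma \ref{lem_boules} (translating balls on the sphere), and a Lipschitz cell decomposition argument, this yields the uniform Poincar\'e. Your appeal to \L ojasiewicz applied to the definable measure functions would at best give you a uniform scale $r_0$ at which some measure lower bound persists; it does not give the geometric structure (a ball inside the zero set, or a Lipschitz graph chunk in the trace set) that the Poincar\'e constant actually depends on. This is most acute under the second alternative, where $\R^n\setminus\omd$ may have zero Lebesgue measure: a trace-Poincar\'e constant is not controlled by $\hn^{n-1}$ of the trace set alone, since a positive-measure codimension-1 set can be concentrated near a lower-dimensional set in a way that degenerates the constant, and ruling this out uniformly in $t\in\delta\omd$ is precisely the content of the paper's Section \ref{sect_equi} and Lemma \ref{lem_contient_boule}.

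A secondary issue: for Gehring's lemma you need a Sobolev--Poincar\'e inequality with a sub-$2$ exponent on the gradient, i.e.\ $\|v\|_{L^2(\bou(t,r))}\lesssim r^{n/2-n/q}\|\nabla v\|_{L^q(\bou(t,r))}$ for some $q<2$; plugging the plain $q=2$ Poincar\'e you state into Caccioppoli produces a tautology (the gradient over $\bou(t,r)$ bounded by the gradient over $\bou(t,2r)$) rather than a reverse H\"older inequality. The sub-$2$ version is again available once the zero set is geometrically controlled, but that brings you back to the same missing geometric ingredient.
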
	

	This theorem makes it possible to provide in many cases a priori estimates under mixed conditions (\ref{eq_boudary_conditions}), as shown by the example below. 
Assumptions on the tangent cone have the advantage to be often easily checked from the equation describing the domain.

\begin{exa}\label{exa1}
Let us consider problem (\ref{eq_argmin}) and set $\omd=\bou_{\R^n}(0,1)\setminus \{P=0\}$.
	 Any solution $u\in W^{1,2}(\bou_{\R^n}(0,1))$ to this problem   satisfies
\begin{equation}\label{eq_pb_lap}\begin{cases}
	\Delta u=0\mbox{ on } \omd\\
	\tra_{\pa \omd} u =g \mbox{ on } \omd_D\\
	\gub \nabla u=0  \mbox{ on } \omd_N,
\end{cases} \end{equation}
 where $\omd_N:=\{x\in \sph^{n-1}: P(x)\ne 0\}$, and $\omd_D:=\pa \omd \setminus \overline{\omd_N}$. After subtracting $g$ to $u$, we get a function $\tilde u$ satisfying
$$\begin{cases}
	\Delta \tilde u=-\Delta g, \quad \mbox{ on } \omd,\\
	\tra_{\pa \omd} \tilde u =0 ,\;\;\quad \mbox{ on } \omd_D,\\
	\gub \nabla \tilde u=-\gub \nabla g, \quad \mbox{ on } \omd_N.
\end{cases} $$
 Existence and uniqueness of the solution to this system are established Lax-Milgram's Theorem and Poincar\'e inequality if $g\in W^{1,2}(\bou_{\R^n}(0,1))$ satisfies $\Delta g\in L^2(\bou_{\R^n}(0,1))$, as explained in the preceding section.  In $\R^3$,  Theorem B of \cite{kai} ensures the continuity of the solution at points of $\delta \omd$ at which Dirichlet condition is required and where $g$ is continuous. Moreover, it directly follows from  Theorem \ref{thm_trace_formula} that any continuous solution of (\ref{eq_pb_lap}) is a solution of (\ref{eq_argmin}) as soon as $\pa \omd$ is dense in  $\delta \omd$.

 Let now $n=3$, $P(x,y,z)=x^3+y^2-z^2x^2$, and $g\in W^{2,2}(\bou_{\R^3}(0,1))$, and
let us check that (\ref{ass_cone_tangent}) holds on $\omd_D$. 
As $\nabla P=(3x^2-2xz^2,2y,-2x^2z)$, the singular locus of $\{P=0\}$ is reduced to the $z$-axis.  Since  $P(x,y,z)=0$ amounts to $y=\pm \sqrt{x^2z^2-x^3}$,  we have  for   $t=(0,0,z_0)$, $z_0\in \R$,
$$\cbf_{t}(\{P=0\})\supset \{y=\pm z_0x, x< 0\} ,$$
  which means that  the second inequality of (\ref{ass_cone_tangent}) holds, and therefore, thanks to Theorem \ref{main}, that  $u \in W^{1,p}_{loc}(\bou(0,1))$  for some $p>2$. Here we write $loc$ because we have not addressed the case of Neumann's condition in this article and we can only apply our theorem locally at singular points where Dirichlet's condition holds. However, in the situation of the present example we have $\omd_N=\sph^2\setminus \{P=0\}$ so that, using a classical trick, we can perform a reflection through the sphere to get a solution $u$ with purely Dirichlet type conditions. Hence, $u\in W^{1,p}(\bou(0,1))$ for some $p>2$.
	\end{exa}
	We checked the second inequality of (\ref{ass_cone_tangent}) because the first one fails at the origin for one connected component of $\omd$. The first one is however useful in the case where the domain has an outward horn, like for instance $\omd:=\{x \in \bou_{\R^3}(0,1):x^2+y^2<z^4\}$. The worst case is of course when inward horns arise, because both  may then fail.
We find here relevant to include an example where   this happens.
\begin{exa}\label{exa2}
Let us take the same situation as in (\ref{eq_pb_lap}), with now $P(x,y,z):=y^2+x^4-z^4x^2$. The singular locus being again equal to the $z$-axis, the same argument as above yields   (\ref{ass_cone_tangent}) for both connected components $\{ P<0 \}$ and $\{ P>0\}$ at every $(0,0,z)$ such that $z\ne 0$, giving, via Theorem \ref{main}, local a priori estimates near those points. It is easily derived from the formula defining $P$ that    $\cbf_0(\pa \omd) $ coincides with the $z$-axis and that (\ref{ass_cone_tangent}) fails for the connected component $\{P>0\}$ at the origin.
 Hence, at the origin, Theorem \ref{main} only yields an a priori estimate for the other connected component.  
 
 That (\ref{ass_cone_tangent}) only fails at the origin can be accounted by the fact that this is the only $0$-dimensional stratum of a $(w)$-regular stratification of the boundary, as (\ref{eq_trivialite_cone_normal}) ensures that (\ref{ass_cone_tangent}) holds at points of strata of codimension smaller than $3$.   Effective methods for computing regular stratifying families of polynomials being available in the semi-algebraic category  \cite[Theorem $9.1.6$]{bcr}, this provides us an effective way to ensure regularity of the solution.

\end{exa}

\section{A few tools from subanalytic geometry}\label{sect_a_few_tools}

%
%

	 \subsection{\L ojasiewicz's inequality.}\label{sect_loj} 	 
	   An important tool of subanalytic geometry is \L ojasiewicz's inequality \cite{loj}, that we recall  in a slightly modified form. We refer to \cite{ania, livre} for a proof.
	 
	 \begin{pro}\label{pro_lojasiewicz_inequality}(\L ojasiewicz's inequality)
	 	Let $f$ and $g$ be two globally subanalytic functions on a  globally subanalytic set $A$ with $\sup\limits_{x\in A} |f(x)|<\infty$. Assume that
	 	$\lim\limits_{t \to 0} f(\gamma(t))=0$ for every
	 	globally subanalytic arc $\gamma:(0,\ep) \to A$ satisfying $\lim\limits_{t \to 0} g(\gamma(t))=0$.
	 		 	Then there exist $\nu \in \N$ and $C \in \R$ such that for any $x \in A$:
	 	$$|f(x)|^\nu \leq C|g(x)|.$$
	 \end{pro}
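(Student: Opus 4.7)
The plan is to reduce the inequality to a one-variable estimate by introducing a definable ``worst-case'' function and then invoking the Puiseux-type structure of definable one-variable functions in the globally subanalytic (polynomially bounded) category. After discarding the trivial case where $|g|$ is bounded away from $0$ on $A$ (in which the conclusion follows at once from $\sup_A|f|<\infty$, with $\nu=1$), I would set
$$\phi(r):=\sup\{|f(x)|:x\in A,\ |g(x)|\le r\}.$$
By o-minimality the supremum is definable in $r$, and the boundedness of $f$ ensures that $\phi$ is a well-defined, bounded, nondecreasing definable function on some interval $(0,r_0]$.

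The key step is to show that $\lim_{r\to 0^+}\phi(r)=0$. If this failed, monotonicity and definability of $\phi$ would produce some $\ep_0>0$ with $\phi(r)\ge \ep_0$ on an interval $(0,r_0']$. The definable-choice property available in any o-minimal structure, applied to the definable set
$$\{(r,x)\in(0,r_0']\times A:|g(x)|\le r\ \mbox{and}\ |f(x)|\ge \ep_0/2\},$$
would then furnish a definable map $\sigma:(0,r_0']\to A$ with $|g(\sigma(r))|\le r$ and $|f(\sigma(r))|\ge \ep_0/2$. The first inequality yields $g\circ\sigma\to 0$ as $r\to 0^+$; since $f\circ\sigma$ is a definable one-variable function bounded by $\sup_A|f|$, the monotonicity theorem guarantees that $\lim_{r\to 0^+}f(\sigma(r))$ exists and has modulus at least $\ep_0/2$. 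Applying the hypothesis of the proposition to the arc $\sigma$ forces this limit to be $0$, a contradiction.

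Since the globally subanalytic structure is polynomially bounded, a definable one-variable function tending to $0$ at $0$ admits a leading Puiseux exponent: there exist $\nu\in\N$, $c>0$ and $r_1\in(0,r_0]$ such that $\phi(r)\le c\,r^{1/\nu}$ for $r\in(0,r_1)$. By the very definition of $\phi$, this yields $|f(x)|^\nu\le c^\nu|g(x)|$ for every $x\in A$ with $|g(x)|<r_1$. On the complementary region $\{x\in A:|g(x)|\ge r_1\}$, boundedness of $f$ gives $|f(x)|^\nu\le (\sup_A|f|)^\nu\le \bigl((\sup_A|f|)^\nu/r_1\bigr)|g(x)|$; taking $C:=\max\bigl(c^\nu,(\sup_A|f|)^\nu/r_1\bigr)$ concludes the proof.

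I anticipate the principal difficulty to be the second step: upgrading the pointwise ``along every arc'' hypothesis into a uniform control over all of $A$. The linchpin is the definable-choice principle, which converts a $\forall r\ \exists x$ statement into an honest definable arc on which the arc-wise hypothesis can be invoked. Once this is in place, the Puiseux-type power asymptotics of definable one-variable functions in a polynomially bounded structure render the remaining steps routine.
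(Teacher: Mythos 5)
Your proof is correct and follows the standard o-minimal strategy for this \L ojasiewicz-type inequality, which is in line with the approach of the references the paper cites (the paper itself gives no proof and points to \cite{ania, livre}). The three ingredients you assemble are exactly the right ones: reduction to the one-variable comparison function $\phi(r)=\sup\{|f(x)|:x\in A,\ |g(x)|\le r\}$, definable choice to convert a would-be uniform lower bound on $\phi$ into a definable curve along which the arcwise hypothesis is violated, and polynomial boundedness of the globally subanalytic structure to upgrade $\phi(r)\to 0$ into $\phi(r)\le c\,r^{1/\nu}$. The final case split between $|g|<r_1$ and $|g|\ge r_1$ and the choice of $C$ are sound, and the degenerate cases ($\inf_A|g|>0$, or $g(x)=0$) are handled correctly, either explicitly or by squeezing $r\to 0^+$ in the estimate $|f(x)|\le\phi(r)$.

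One small detail you elide: the definable selection $\sigma:(0,r_0']\to A$ furnished by definable choice need not be continuous on the whole interval, whereas the hypothesis of the proposition is phrased for arcs $\gamma:(0,\ep)\to A$. This is repaired by the monotonicity theorem applied to each coordinate of $\sigma$, which yields a smaller interval $(0,\ep')$ on which $\sigma$ is continuous; only then may the arcwise hypothesis be invoked. This is routine, but since the contradiction hinges precisely on applying the hypothesis to $\sigma$, it is worth stating.
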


\subsection{Stratifications and a key  proposition on stratified domains}\label{sect_stratifications}
The {\bf angle} between two given vector subspaces $E$ and $F$ of $\R^n$ will be
estimated as:
$$\angle(E,F):=\underset{u\in E,|u|
	\le 1}{\sup} d(u,F).$$
\begin{dfn}\label{dfn_stratifications}
	A {\bf
		stratification of}\index{stratification} a definable set $X\subset \R^n$ is a finite partition of it into
	definable $\cc^\infty$ submanifolds of $\R^n$, called {\bf strata}\index{stratum}.
%

	 Let $S$ and $S'$ be a couple of strata (of some stratification)   and let $z \in S\cap \adh{S'}$.
	We will say that $(S',S)$ satisfies the {\bf $(w)$ condition at $z$} (of Kuo-Verdier) if there exists a constant $C$ such that for $x\in S'$ and $y
	\in S$ in a neighborhood of $z$:
	\begin{equation}\label{eq_w}
		\angle (T_y S,T_x S') \leq C
		|y-x|.\end{equation}

A stratification $\Sigma$ is said to be {\bf $(w)$-regular} if every couple  of strata of $\Sigma$ satisfies the $(w)$ condition.
We say that a stratification $\Sigma$ of a set $X$ {\bf satisfies the frontier condition} if the closure in $X$ of every  $S\in \Sigma$ is a union of strata of $\Sigma$. Equivalently, given $S$ and $S'$ in $\Sigma$, either $\adh{S'}\cap X\cap S= \emptyset$ or $S\subset \adh {S'}$. In the latter case, if $S'\ne S$ we then write $S\prec S'$.  Since strata are subanalytic, this forces $\dim S<\dim S'$. When strata are connected, the $(w)$ condition entails the  frontier condition (see \cite{ver} or \cite[Propositions $2.6.3$ and $2.6.17$]{livre}).  \end{dfn}
The following proposition, which yields a Poincar\'e type estimate near a stratum of a $(w)$-regular stratification, will be crucial in the proof of Theorem \ref{main}.
\begin{pro}\label{pro_vshort}
	Let $\Sigma$ be a $(w)$-regular stratification of $\delta \omd$ and let $S\in \Sigma$. If (\ref{ass_cone_tangent}) holds at all $t\in S$ for some $\alpha>0$ independent of $t$, then there are $\delta \in \D^+(S)$ and $C>0$ such that for all $u\in W^{1,p}(  \omd,  \pa\omd)$, $p\in [1,+\infty)$, we have for almost every $\eta$ if $\dim S<n-1$:
	\begin{equation}\label{ineq_vshort}
		||u||_{L^p(V^\delta_\eta\cap \omd)} \le C \eta ||\nabla u||_{L^p(V^\delta_\eta\cap \omd)},
	\end{equation}
where $V^\delta_\eta:= \{x\in U^\delta_S:d(x,S)=\eta\}$, and, if $\dim S=n-1$:
\begin{equation}\label{ineq_vshort_reg}
	||u||_{L^p(V^\delta_\eta\cap \omd)} \le C \eta^{1-\frac{1}{p}}\, ||\nabla u||_{L^p(U^{\delta_\eta}_S\cap \omd)},
\end{equation}
where $\delta_\eta$ is the function defined by $\delta_\eta(x):=\min(\delta(x), \eta)$. 
	\end{pro}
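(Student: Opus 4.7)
The plan is to establish a fiberwise Poincar\'e estimate on each normal sphere $\Sigma_{t,\eta}:=\pi_S^{-1}(t)\cap V^\delta_\eta$ and then integrate over $t\in S$. Since every $u\in W^{1,p}(\omd,\pa\omd)$ extends by zero to an element of $W^{1,p}(\R^n)$ by \cite[Proposition $4.5$]{lprime}, I regard $u$ as globally defined and vanishing on $\R^n\setminus\omd$, with zero trace along $\pa\omd$. By compactness of $\adh S$, the $(w)$ condition, and the existence of a definable $\cc^\infty$ tubular diffeomorphism $(\pi_S,d(\cdot,S)):U_S^{\delta_0}\to S\times[0,\delta_0)$ whose jacobian is bounded above and below on small slices (again by $(w)$-regularity), it suffices to work in a small neighborhood of an arbitrary $t_0\in S$ and then patch with a definable partition of unity.

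\textbf{Transfer of the tangent cone assumption.} The key subanalytic ingredient is normal pseudo-flatness of $(w)$-regular stratifications (formula (\ref{eq_trivialite_cone_normal}) of the introduction), which makes $\cbf_t(\R^n\setminus\omd)$ and $\cbf_t(\delta\omd)$ invariant under translation by $T_tS$. Setting $d:=\dim S$, this invariance combined with the pointwise assumption (\ref{ass_cone_tangent}) yields, uniformly in $t\in S$, a lower bound on $\hn^{n-d-1}(\cbt_t(\R^n\setminus\omd)\cap N_tS)$ (Case~1) or on $\hn^{n-d-2}(\cbt_t(\delta\omd)\cap N_tS)$ (Case~2), where $N_tS$ denotes the normal space to $S$ at $t$. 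A \L ojasiewicz-type estimate (Proposition \ref{pro_lojasiewicz_inequality}) comparing $\omd$ to its tangent cone on small normal sections, together with definable Hausdorff-continuity of tangent cones along $S$, then upgrades these to uniform fiberwise measure bounds: $\hn^{n-d-1}(\Sigma_{t,\eta}\setminus\omd)\gtrsim\eta^{n-d-1}$ (Case~1), or $\hn^{n-d-2}(\Sigma_{t,\eta}\cap\delta\omd)\gtrsim\eta^{n-d-2}$ (Case~2), valid for all $t\in S$ and all $\eta$ smaller than some $\delta\in\D^+(S)$.

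\textbf{Fiberwise Poincar\'e and integration.} Assume first $\dim S<n-1$, so each $\Sigma_{t,\eta}$ is an $(n-d-1)$-sphere of radius $\eta$. In Case~1 the extension of $u$ vanishes on a positive $\hn^{n-d-1}$-fraction of $\Sigma_{t,\eta}$, while in Case~2 its trace vanishes on an $(n-d-2)$-dimensional definable submanifold of $\Sigma_{t,\eta}$ of positive $\hn^{n-d-2}$-measure. In either case, rescaling the sphere to radius $1$ and applying the standard (trace-)Poincar\'e inequality on the resulting spherical domain gives
\begin{equation*}
\|u\|_{L^p(\Sigma_{t,\eta}\cap\omd)}\le C\eta\,\|\nabla_{tg}u\|_{L^p(\Sigma_{t,\eta}\cap\omd)}
\end{equation*}
for a.e.\ $t$ and $\eta$ (by Fubini and (\ref{eq_coarea})), with $C$ independent of $t$ and $\eta$. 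Raising to the $p$-th power and integrating over $t\in S$ via the bounded jacobian of the tubular diffeomorphism yields (\ref{ineq_vshort}). When $\dim S=n-1$, the stratum $S$ is of top dimension in $\delta\omd$ and hence lies in $\pa\omd$, so $\tra_{\pa\omd}u=0$ along $S$; the fundamental theorem of calculus along normal rays from $S$ into $\omd$, followed by H\"older's inequality and integration over $S$, yields (\ref{ineq_vshort_reg}) directly.

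\textbf{Main obstacle.} The genuinely delicate step is the transfer in the second paragraph from the pointwise assumption (\ref{ass_cone_tangent}) to the uniform fiberwise geometric lower bounds: one must simultaneously control the variation of tangent cones along $S$ (via $(w)$-regularity), their decomposition into normal components (via normal pseudo-flatness), the closeness of $\omd$ to its tangent cone on small normal sections (via \L ojasiewicz), and the uniformity of the Poincar\'e constant on the resulting spherical domains (via subanalytic bi-Lipschitz triviality of the family $\Sigma_{t,\eta}\cap\omd$ over $t\in S$ after rescaling by $\eta$). Each of these ingredients rests essentially on the subanalytic hypothesis on $\omd$, and the function $\delta\in\D^+(S)$ of the statement is obtained by choosing, around each $t_0\in S$, a neighborhood on which all the implicit constants and error terms are simultaneously under control.
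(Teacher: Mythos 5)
Your overall strategy matches the paper's: a fiberwise Poincar\'e estimate on the normal links $\pi_S^{-1}(t)\cap V^\delta_\eta$ followed by integration over $t\in S$ via the coarea formula, with normal pseudo-flatness (\ref{eq_trivialite_cone_normal}) used to convert (\ref{ass_cone_tangent}) into a statement about the normal cones $\cbt^S_t$; and your treatment of the case $\dim S=n-1$ by integrating along normal rays is essentially the paper's argument. However, the central transfer step is not justified by the tools you invoke. Hausdorff convergence of the normal links to the normal cone (which is what the normal pseudo-flatness formula (\ref{eq_lim_f_t}) gives) does \emph{not} preserve Hausdorff measure, so ``\L ojasiewicz plus Hausdorff-continuity of tangent cones'' does not yield the claimed uniform fiberwise measure bounds $\hn^{n-d-1}(\Sigma_{t,\eta}\setminus\omd)\gtrsim\eta^{n-d-1}$ or $\hn^{n-d-2}(\Sigma_{t,\eta}\cap\delta\omd)\gtrsim\eta^{n-d-2}$. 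The paper bridges this gap with a definability-specific result (Lemma \ref{lem_contient_boule}): in a definable family, a member that is Hausdorff-close to a set of measure $\ge b$ must contain a ball of some radius $\sigma$ depending only on $b$ and the family. Its proof goes through Lipschitz cell decompositions for definable families, not through measure comparison, and it produces a ball (not merely a measure bound) — precisely what the spherical Poincar\'e lemma (Lemma \ref{lem_boules}) consumes.

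Relatedly, in your Case~2 (the $\hn^{n-2}(\cbt_t(\delta\omd))\ge\alpha$ case) a positive $\hn^{n-d-2}$-measure bound on the zero set of the trace on the rescaled fiber does not alone give a Poincar\'e inequality with a constant uniform in $(r,t)$: the constant depends on the geometry of the domain $\Sigma_{t,\eta}\cap\omd$ and of the zero set, not just on the measure of the latter. The paper handles this by invoking a definable bi-Lipschitz trivialization with uniform Lipschitz constants \cite[Theorem~$1.3.2$]{paramreg}, placing $A'_{r,t}$ and $B'_t$ inside finitely many uniformly Lipschitz graphs, and then combining a cell decomposition with Lemma \ref{lem_contient_boule} to show the zero set sits as a Lipschitz graph over a ball of definite radius; only then does the classical Poincar\'e constant on Lipschitz domains apply with uniform data. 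You name ``bi-Lipschitz triviality of the family'' as an ingredient in your final paragraph, but the body of the proposal treats the measure bound as sufficient, and the Hausdorff$\to$measure passage is asserted rather than established. These are the two places where the argument as written would not close.
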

The proof of this proposition is postponed to section \ref{sect_pf_vshort}. We first wish to explain why it makes it possible to prove the main theorem. Let us already observe that, by the coarea formula (\ref{eq_coarea}), this proposition yields (assuming (\ref{ass_cone_tangent})) for $u\in W^{1,p}( \omd, \pa \omd)$
\begin{equation}\label{ineq_short_U}
		||u||_{L^p(U^{\delta_\eta}_S\cap \omd)} \lesssim\, \eta \,||\nabla u||_{L^p(U^{\delta_\eta}_S\cap \omd)}.
\end{equation}

\section{Proof of the main result}\label{sect_pf_main}
We are going to show Theorem \ref{main}, admitting Proposition \ref{pro_vshort}.  Let us   fix for all this section a $(w)$-regular stratification $\Sigma$ of $\delta \omd $.  We will assume that strata are connected (this can always be obtained by a refinement), which yields the frontier condition (see Definition \ref{dfn_stratifications}).
\subsection{The weight functions $\rho_k$}\label{sect_rho_k} An important step of the proof of Theorem \ref{main} consists of establishing weighted estimates near the strata (Lemma \ref{lem_short}), which requires to define weight functions.

\begin{lem}\label{lem_weights}
Given a definable set $X\subset \delta \omd$, 
	there is a definable function    $f_X$ that belongs to $W^{2,\infty}(\omd) $ and satisfies for some $\kappa>0$ \begin{equation*}
			\left(\frac{1}{2}d(x, X )\right)^\kappa \le f_X(x)\le \left(2 d(x, X )\right)^\kappa .
		\end{equation*}
	\end{lem}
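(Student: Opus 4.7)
The plan is to define $f_X := c\,\rho^\kappa$ for a positive constant $c$ and an integer $\kappa \ge 2$, where $\rho : \omd \to [0,\infty)$ is a definable \emph{regularized distance to $X$}: a function smooth on $\omd \setminus \overline{X}$, comparable to $d(\cdot,X)$ with constants arbitrarily close to $1$, and satisfying the pointwise estimates
$$|\nabla \rho(x)| \lesssim 1 \qquad \text{and} \qquad |D^2 \rho(x)| \lesssim d(x,X)^{-1}.$$
Granted such a $\rho$, the claimed properties of $f_X$ follow from direct differentiation.

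First I would build $\rho$ by adapting Stein's Whitney-type construction to the definable category. A subanalytic cell decomposition of $\overline{\omd}$ compatible with $X$ yields a finite family of definable open sets $\{Q_i\}$ covering $\omd \setminus \overline{X}$, with $\mathrm{diam}(Q_i)$ comparable to $d(Q_i,X)$ and uniformly bounded overlap. Choosing base points $x_i \in Q_i$ and a $C^\infty$ definable partition of unity $\{\phi_i\}$ subordinate to a slight enlargement of $\{Q_i\}$ satisfying $|D^k \phi_i| \lesssim d(\cdot,X)^{-k}$ for $k=1,2$, I set
$$\rho(x) := \sum_i \phi_i(x)\, d(x_i, X);$$
the standard estimates for Whitney decompositions then yield the three required properties, and $\rho$ is definable by construction.

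Next I would compute
$$\nabla f_X = c\kappa\, \rho^{\kappa-1}\nabla\rho, \qquad D^2 f_X = c\kappa(\kappa-1)\, \rho^{\kappa-2}\nabla\rho \otimes \nabla\rho + c\kappa\, \rho^{\kappa-1} D^2 \rho,$$
so $|\nabla f_X| \lesssim \rho^{\kappa-1}$ and $|D^2 f_X| \lesssim \rho^{\kappa-2} + \rho^{\kappa-1}/d(\cdot,X) \lesssim d(\cdot,X)^{\kappa-2}$. Since $\omd$ is bounded and $\kappa \ge 2$, all three quantities are uniformly bounded almost everywhere on $\omd$, so $f_X \in W^{2,\infty}(\omd)$. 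The equivalence $\rho \sim d(\cdot,X)$ yields $f_X \sim d(\cdot,X)^\kappa$; refining the Whitney cover so that $\rho/d(\cdot,X) \in [1-\ep, 1+\ep]$ with $\ep$ small, and choosing $c$ accordingly, the sharper bounds $(d(\cdot,X)/2)^\kappa \le f_X \le (2\,d(\cdot,X))^\kappa$ are met.

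The main obstacle I expect is the construction of $\rho$ itself: while Stein's Euclidean version is textbook, staying in the definable category requires both a definable Whitney-type cover (available via subanalytic cell decomposition) and a definable $C^\infty$ partition of unity with the correct derivative estimates, which rests on the existence of smooth definable bump functions in the o-minimal structure. Once this technical point is in place, the rest of the proof is essentially the short computation outlined above.
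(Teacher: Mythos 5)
Your approach is genuinely different from the paper's, but it rests on a construction that is not available, and the paper's argument is designed precisely to avoid needing it.

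The gap is the regularized distance $\rho$. You assert that a subanalytic cell decomposition of $\overline{\omd}$ compatible with $X$ yields a \emph{finite} family of definable open sets $\{Q_i\}$ covering $\omd$ with $\mathrm{diam}(Q_i)$ comparable to $d(Q_i,X)$. No finite family can have this property: near $X$ the distance tends to zero, so any $Q_i$ whose closure meets $X$ has $d(Q_i,X)=0$ but positive diameter, while sets bounded away from $X$ cannot cover a full punctured neighborhood of $X$ in $\omd$. A genuine Whitney decomposition requires infinitely many pieces accumulating on $X$, and whether such a decomposition together with a subordinate partition of unity satisfying the Stein estimates $|D^k\phi_i|\lesssim d(\cdot,X)^{-k}$ can be produced \emph{inside the definable category} (so that $\rho$ is definable) is exactly the hard point. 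You flag it as the main obstacle, and it is; without it the bounds $|\nabla\rho|\lesssim 1$ and $|D^2\rho|\lesssim d(\cdot,X)^{-1}$, and hence the clean choice $\kappa=2$, have no foundation.

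The paper sidesteps this entirely. It takes a definable $C^2$ approximation $g$ of $d(\cdot,X)$ (Shiota/Escribano, or Proposition~2.7.1 of the survey) with $|g-d(\cdot,X)|<\ep$, \emph{without} any pointwise control on $\nabla g$ or $D\nabla g$. Since $g$ vanishes on $\overline X$ and its derivatives can only blow up there, \L ojasiewicz's inequality applied to $g$ and $1/(1+|\nabla g|)$ gives a bound $g^\nu|\nabla g|\lesssim 1$ for some integer $\nu$; hence $\nabla(g^\kappa)=\kappa g^{\kappa-1}\nabla g$ is bounded once $\kappa>\nu$, and similarly for $D\nabla(g^\kappa)$. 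The lemma does not require $\kappa=2$, only \emph{some} $\kappa$, and the paper exploits exactly that freedom: it trades the hard Stein-type derivative estimates, which you would need to establish in the definable category, for the soft but uniform \L ojasiewicz bound. Your differentiation of $f_X=c\rho^\kappa$ is correct, but you are proving a sharper statement than is needed, using an unproven (and likely much harder) ingredient.
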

\begin{proof}
For each $\ep\in \D^+ (\omd)$, we can find a definable $\cc^2$ approximation of $ \R^n\setminus \adh X\ni x\mapsto d(x,X)$,  say $g:\R^n\setminus \adh X\to \R$, satisfying $|g(x) -d(x,X)|<\ep(x)$ for all $x\notin  \adh X$ (see \cite{shi, esc} or \cite[Proposition $2.7.1$]{livre}).  Note that if $\ep(x)\le \frac{d(x,X)}{2}$ then we have $g(x)\le 2 d(x,X)$ as well as $d(x,X)\le 2 g(x)$.
The derivative of $g $ can only tend to infinity at points of $\adh X $. As $g $ vanishes on this set, and since  $\nabla (g ^\kappa )= \kappa g ^{\kappa -1} \nabla g $, it follows from \L ojasiewicz's inequality (applied to $g$ and $\frac{1}{1+|\nabla g |}$, which can only tend to $0$ at points of $\adh X $) that $f_X :=g ^\kappa_{|\omd}\in W^{1,\infty}(\omd)$ if $\kappa $ is chosen  large enough. For the same reason,  $|D_x \nabla f_X|$ is bounded
 if $\kappa$ is sufficiently large.
\end{proof}
 We now define our weight functions by applying this lemma to unions of strata. Namely, we set
 for each $0\le k\le n$
 \begin{equation}\label{eq_weights}
 	\rho_k:=f_{X_k} \,,\quad \mbox{where $X_k:=\bigcup \{S\in \Sigma:\dim S\le k\}$.}  \end{equation} 

\subsection{Mollifying with parameters}\label{sect_mol} The following convolution product will be of service. Let $V$ be a definable submanifold of $\R^n$.
For  $u\in L^p(V\times\R)$ and $\psi\in L^1(\R)$,
we define a function $u \stt \psi$ on $V\times\R $ by setting for almost every $(x,y)\in V\times\R$
\begin{equation}\label{mol}
	u\stt\psi(x,y)=\int_{\R}u(x,z)\, \psi(y-z)\,dz.
\end{equation} We write $\stt$ rather than $*$ to emphasize that it is not the usual convolution product.
It easily follows from Young's inequality and standard arguments (see \cite[Section $2.4$]{trace} or \cite[Section $4.2$]{dukulan} for details) that if  $\varphi:
\R\to \R$ is a nonnegative smooth compactly supported function satisfying $\int \varphi =1$ and if we set $\varphi_\sigma(x):=\frac{1}{\sigma}\varphi(\frac{x}{\sigma})$, then $u\stt\varphi_\sigma$ tends to $u$ in $W^{1,p}(V\times \R)$ as $\sigma\to 0$ for all $u\in W^{1,p}(V\times \R)$.

\subsection{Weighted estimates near strata.} We start with a lemma that yields a weighted inequality near a stratum of $\Sigma$ (we recall that, throughout this section, $\Sigma$ denotes a fixed stratification of $\delta \omd$). 
The proof of Theorem \ref{main} will rely on an induction on the codimension of the strata of $\Sigma$ and we will need this lemma to perform the induction step.

%

The proofs will involve cut-off functions so as to localize the problem near a stratum of the boundary, and the following functions will be useful to construct them.
 Let $\psi:\R \to [0,1]$  be a decreasing $\cc^\infty$ function
such that $\psi\equiv 1$ on $(-\infty,\frac{1}{2})$ and $\psi\equiv 0$ on $[\frac{3}{4},\infty)$, and set for $\eta$ positive and $s\in \R$: \begin{equation}\label{eq_psieta}\psi_\eta(s):=\psi\left(\frac{s}{\eta}\right),\end{equation}
as well as $\psi _0(s)=0$ for $s\ge 0$ and $1$ for $s< 0$. Observe that we have for $\eta>0$:
\begin{equation}\label{eq_psieta_ineq}
	\sup \psi_\eta=1,\quad  \sup \left|\psi_\eta'\right|\lesssim \frac{1}{\eta}, \et \sup\left|\psi_\eta''\right| \lesssim \frac{1}{\eta^2},
\end{equation}
as well as
\begin{equation}\label{eq_psi_eta_supp}
 \psi_\eta\equiv 1 \mbox{ on } (-\infty,\,\frac{\eta}{2}] \et \psi_\eta\equiv 0 \mbox{ on } [\frac{3\eta }{4},+\infty) .
\end{equation}

 \begin{lem}\label{lem_short}
 Let $S\in \Sigma$  and assume that there is $\alpha>0$ such that  (\ref{ass_cone_tangent}) holds at every $t\in \delta \omd$.   Then there are  positive constants $C$ and $\mu$ as well as  $\delta\in \D^+(S)$  such that for every $u\in W^{1,2}( \omd,\pa \omd)$ supported in $U^\delta _S$ we have
\begin{equation}\label{ineq_short_w}
 ||\rho_{k}^{-\mu}\nabla u||_{L^{2}( \omd)}\le C  ||L u||_{L^2(\omd)},
\end{equation}
where $\rho_{k}$ is given by (\ref{eq_weights}) and $ k=\dim S$.
\end{lem}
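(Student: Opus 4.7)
The plan is to test the weak formulation of $Lu=f$ against the weighted function $v:=\rho_k^{-2\mu}u$ and close the resulting weighted energy identity via a Hardy-type inequality derived from Proposition \ref{pro_vshort}. A preliminary approximation step (for instance by mollification in the normal direction to $S$ via $\stt$ of section \ref{sect_mol}) justifies the admissibility of $v$ as an element of $W^{1,2}(\omd,\pa\omd)$ and ensures that the weighted integrals below are finite for sufficiently small $\mu>0$; the uniform estimate will then pass to the limit at the end. Plugging $v$ into (\ref{eq_trace_theorem}) with $\beta=A\nabla u$ and expanding $\nabla v=\rho_k^{-2\mu}\nabla u-2\mu\rho_k^{-2\mu-1}u\nabla\rho_k$ yields the weighted energy identity
$$\int_\omd\rho_k^{-2\mu}\,A\nabla u\cdot\nabla u=2\mu\int_\omd\rho_k^{-2\mu-1}\,u\,A\nabla u\cdot\nabla\rho_k\;-\;\int_\omd (Lu)\,\rho_k^{-2\mu}u,$$
whose left-hand side is bounded below by $\lambda_0\,||\rho_k^{-\mu}\nabla u||_{L^2}^2$ thanks to ellipticity (\ref{eq_ellipticity}). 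Since $\rho_k\in W^{2,\infty}(\omd)$ has bounded gradient, Cauchy-Schwarz and Young yield, for any $\epsilon>0$,
$$\lambda_0\,||\rho_k^{-\mu}\nabla u||_{L^2}^2\le\epsilon\,||\rho_k^{-\mu}\nabla u||_{L^2}^2+C(\mu,\epsilon)\,||\rho_k^{-\mu-1}u||_{L^2}^2+||Lu||_{L^2}\,||\rho_k^{-2\mu}u||_{L^2}.$$

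The cornerstone of the argument is a weighted Hardy estimate $||\rho_k^{-\mu-1}u||_{L^2}\lesssim ||\rho_k^{-\mu}\nabla u||_{L^2}$, which I would derive from Proposition \ref{pro_vshort} as follows: slice $U_S^\delta\cap\omd$ by level sets of $d(\cdot,S)$, apply the $L^2$ version of (\ref{ineq_vshort}) on each slice $V_\eta^\delta$, and integrate via the coarea formula (\ref{eq_coarea}) against the appropriate negative power of $\eta$. Since $\rho_k\sim d(\cdot,X_k)^\kappa$ and $X_k=\bigcup_{\dim T\le k}T$ strictly contains $S$, passing from a Hardy estimate in $d(\cdot,S)$ to one in $\rho_k$ requires comparing the two distances on $U_S^\delta$ via \L ojasiewicz's inequality (Proposition \ref{pro_lojasiewicz_inequality}), together with a choice of $\delta\in\D^+(S)$ decaying sufficiently fast near $\adh S\setminus S$ so that the resulting weight discrepancy is controllable. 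Combining this Hardy with the elementary bound $||\rho_k^{-2\mu}u||_{L^2}\lesssim ||\rho_k^{-\mu-1}u||_{L^2}$ (valid because $\rho_k$ is bounded on the bounded domain $\omd$ for small enough $\mu$), the preceding inequality reduces to
$$\lambda_0\,||\rho_k^{-\mu}\nabla u||_{L^2}^2\le\bigl(\epsilon+C(\mu,\epsilon)C_H^2\bigr)||\rho_k^{-\mu}\nabla u||_{L^2}^2+C'\,||Lu||_{L^2}\,||\rho_k^{-\mu}\nabla u||_{L^2},$$
and one closes the estimate by choosing $\mu>0$ and $\epsilon$ small enough to make the coefficient of the quadratic term strictly less than $\lambda_0$.

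The main obstacle is the Hardy step: the weight $\rho_k$ is a power of the distance to $X_k$ rather than to $S$, and near lower-dimensional strata in $\adh S\setminus S$ the former can be strictly smaller than the latter. Thus the slice Poincar\'e of Proposition \ref{pro_vshort}, which naturally controls estimates in terms of $d(\cdot,S)$, does not directly yield Hardy in $\rho_k$; the \L ojasiewicz-type comparison between the two distances, together with the careful calibration of $\delta\in\D^+(S)$ near $\adh S\setminus S$ and the verification that the resulting constants are compatible with the absorption argument, is where the technical heart of the proof lies.
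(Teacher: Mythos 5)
Your overall strategy — testing the weak formulation against a weighted version of $u$ and absorbing the resulting cross term — is in the same spirit as the paper, but the way you propose to close the estimate does not work, because the "cornerstone" weighted Hardy inequality
$$\|\rho_k^{-\mu-1}u\|_{L^2}\lesssim \|\rho_k^{-\mu}\nabla u\|_{L^2}$$
is generally \emph{false} for the weight $\rho_k$ of Lemma \ref{lem_weights}, and is not derivable from Proposition \ref{pro_vshort}. The reason is a mismatch of exponents. After shrinking $\delta$ so that $U_S^\delta$ meets no other stratum of $X_k$, one has $\rho_k\sim d(\cdot,S)^\kappa$ on $U_S^\delta$, where $\kappa$ is the (typically large) exponent produced by Lemma \ref{lem_weights}. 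Written in $d:=d(\cdot,S)$, your Hardy reads
$$\int d^{-\kappa(2\mu+2)}u^2 \lesssim \int d^{-2\kappa\mu}|\nabla u|^2,$$
a gap of $2\kappa$ in the exponent of $d$, whereas squaring and integrating the slice Poincar\'e \eqref{ineq_vshort} via the coarea formula only yields the gap-$2$ estimate $\int d^{-a}u^2\lesssim\int d^{-a+2}|\nabla u|^2$. For $\kappa>1$ the former is strictly stronger and fails: taking $S=\{0\}$, $\rho_k\sim|x|^\kappa$, and $u=\chi(|x|)\,|x|^\beta$ with $\chi$ a cut-off, any $\beta$ with $\kappa\mu+1-\tfrac n2<\beta\le\kappa\mu+\kappa-\tfrac n2$ makes the left side diverge while the right side converges, for every $\mu>0$. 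The difficulty you flagged (comparing $d(\cdot,X_k)$ with $d(\cdot,S)$ near $\adh S\setminus S$, to be handled by calibrating $\delta$) is not the real obstruction: even on the part of $U_S^\delta$ where the two distances coincide, the power $\kappa>1$ already kills the estimate. Sharpening the cross-term bound via $|\nabla\rho_k|\lesssim\rho_k^{(\kappa-1)/\kappa}$ reduces the gap but still misses, because the gradient bound on the approximate distance $g$ in the proof of Lemma \ref{lem_weights} carries an extra \L{}ojasiewicz loss.

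The paper avoids a Hardy inequality altogether. It tests against the cut-off $\psi_{\eta,\zeta}^S\,u$ supported in the tube $U_S^{\eta+\zeta}$, lets $\zeta\to 0$ so that the derivative of the cut-off concentrates on the slice $V_\eta^\delta$, and uses the slice Poincar\'e \eqref{ineq_vshort} only on that single slice. This produces the differential inequality
$$\lambda(\eta)\le C\eta\,\|Lu\|_{L^2}^2+C\eta\,\lambda'(\eta),\qquad \lambda(\eta):=\|\nabla u\|_{L^2(U_S^{\delta_\eta})}^2,$$
which is then integrated (a Gr\"onwall/hole-filling argument) to give the \emph{PDE-dependent} decay $\lambda(\eta)\lesssim\eta^\sigma\|Lu\|_{L^2}^2$ for a possibly small $\sigma>0$ (Claims \ref{cla1} and \ref{ste_2}). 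Only at the very end does \L{}ojasiewicz appear, in the \emph{one-sided} comparison $d(\cdot,S)^N\lesssim\rho_k$, and the weighted estimate \eqref{ineq_short_w} follows from a dyadic summation, with the small exponent $\mu=\sigma/(2N)$. This decay uses the equation $Lu=f$; it is not an $L$-independent Hardy inequality, which is precisely why it survives the large exponent $\kappa$ hidden in $\rho_k$. To repair your argument you would need to replace the claimed Hardy inequality with a decay estimate of this Gr\"onwall type, which is what the paper does.
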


\begin{proof} We may of course assume that $Lu$ is $L^2$. Let $\delta\in \D^+(S)$ be as in Proposition \ref{pro_vshort}.
 We first derive from  (\ref{ineq_vshort}) a $W^{1,2}$ a local estimate near $S$: 
 

\begin{claim}\label{cla1} There are positive constants  $C$ and $\eta_0$ such that for all $u\in W^{1,2}( \omd,\pa \omd)$ supported in $U^\delta _S$ we have for almost every $0<\eta<\eta_0$:
\begin{equation}\label{eq_ste1}
 || \nabla u||_{L^2(U^{\delta_\eta}_S)}^2\le C\eta (||L u||_{L^2(\omd)}^2+||\nabla u||^2_{L^2(V^\delta_\eta)}),
\end{equation}
where $V_\eta^\delta$  and $\delta_\eta$ are like in (\ref{ineq_vshort}) and (\ref{ineq_short_U}) respectively.
\end{claim}

 To prove this claim, let  for  $\zeta$ and $\eta$ nonnegative
 $$\psi_{\eta,\zeta} ^S (x) := \psi_{\eta,\zeta} (d(x,S)), \quad x \in  \omd,\quad \mbox{ where }\quad
 \psi_{\eta,\zeta} (s) := \psi_{\zeta} (s - \eta),$$
with $\psi_\zeta$ as in (\ref{eq_psieta}). We start with the case $\dim S<n-1$. Applying (\ref{eq_trace_theorem}) to $A\nabla u$ and  $\psi^S_{\eta,\zeta}\cdot u$ (with $u\in W^{1,2}(\omd,\pa \omd)$ supported in $ U^{\delta}_S$), we get for $\eta$ and $\zeta$ positive
\begin{equation}\label{eq_ippw}
 -<A \nabla u\, ,\, \psi^S_{\eta,\zeta}\nabla u>=<L u,\psi^S_{\eta,\zeta} u>+< A\nabla u,u \nabla \psi^S_{\eta,\zeta}>.
\end{equation}
As $u,\nabla u$, and $Lu$ are all $L^2$, we have
\begin{equation}\label{eq_psieta_pa_u}\lim_{\zeta\to 0}  |<A \nabla u\, ,\, \psi^S_{\eta,\zeta}\nabla u>|= | <A \nabla u\, ,\,\psi^S_{\eta,0}\nabla u>|\overset{(\ref{eq_ellipticity})}\ge  \lambda_0 \cdot ||\nabla u||_{L^{2}(U^{\delta_\eta}_S)}^2 .\end{equation}
Moreover, since $\psi^S_{\eta,\zeta}\le 1 $ and $\supp\, \psi^S_{\eta,\zeta}\subset U^{\nu}_S$ where $\nu:=\eta+\zeta$,  we see that
\begin{equation*}
|<L u, \psi^S_{\eta,\zeta} u>|\le  ||L u||_{L^{2}(\omd)} || u||_{L^{2}(U^{\delta_\nu}_S)}
\overset{(\ref{ineq_short_U})}{\lesssim}   \nu\, ||L u||_{L^{2}(\omd)} ||\nabla u||_{L^{2}(U^{\delta_\nu}_S)}   ,
\end{equation*}
so that, passing to the limit,
\begin{equation}\label{ineq_term1}
\lim_{\zeta\to 0} |<L u, \psi^S_{\eta,\zeta} u>|\lesssim\eta\, ||L u||_{L^{2}(\omd)} ||\nabla u||_{L^{2}(U^{\delta_\eta}_S)}  \overset{(\ref{eq_Delta_isom})}\lesssim     \eta  ||L u||_{L^{2}(\omd)}^2.
\end{equation}
Hence, plugging (\ref{eq_psieta_pa_u}) and (\ref{ineq_term1}) into (\ref{eq_ippw}),  we see that (\ref{eq_ste1}) reduces to show that for almost every $\eta$
\begin{equation}\label{eq_lim_entrl}
\limsup_{\zeta \to 0}| <A\nabla u , u \nabla \psi^S_{\eta,\zeta} >|\lesssim  \eta \, ||\nabla u||_{L^2(V_\eta^\delta)}^2.
\end{equation}
To show (\ref{eq_lim_entrl}), notice that,  we have
\begin{equation*}
 <A\nabla u, u \nabla \psi^S_{\eta,\zeta} >= \int_{U^{\delta}_S}  A \nabla u\cdot  u\nabla\psi^S_{\eta,\zeta} 
\overset{(\ref{eq_coarea})}{=}  \int_0^\infty\int_{V_\theta ^\delta}  A(x) \nabla u(x)  \cdot u(x)\nabla\psi^S_{\eta,\zeta} (x) dx\, d\theta ,
\end{equation*} since $\jac d(\cdot ,S)=|\nabla d(\cdot ,S) |\equiv 1$. As $A$ is $L^\infty$ and $|\nabla \psi^S_{\eta,\zeta} (x)|=-\psi'_{\zeta} (d(x,S)-\eta)$, which is supported in $\{\eta < d(x,S)<\eta+\zeta\}$,   we  derive
\begin{equation}\label{eq_link_inner}
	|<A\nabla u, u \nabla \psi^S_{\eta,\zeta} >|\lesssim- \int_\eta^{\eta+\zeta}\int_{V_\theta ^\delta} |\nabla u(x)|  \cdot| u(x)|\cdot \psi'_{\zeta} (\theta -\eta)\, dx\, d\theta .
\end{equation}

  By   \cite[Theorem $7.6.3$]{paramreg} (or \cite[Corollary $3.2.12$]{livre}) there are $\eta_0>0$ and a definable family of bi-Lipschitz homeomorphisms
$\Phi_\theta :V_{\eta_{_0}}^\delta  \to V_\theta ^\delta , \; \theta \in (0,\eta_0)  $,
 such that $\Phi_\theta (\omd\cap V_{\eta_{_0}}^\delta)=\omd\cap V_\theta ^\delta$. 
  Setting for simplicity
$$v(x,\theta ):=  |\nabla u(\Phi_\theta (x)) | \cdot u(\Phi_\theta (x)) \cdot\jac \Phi_\theta (x),
$$
and pulling-back by means of $\Phi_\theta $ the integral on $V^\delta_\theta $ that sits in the right-hand-side of (\ref{eq_link_inner}), we get (applying also Fubini) for $\eta\in (0,\eta_0)$ and $\zeta\in (0,\eta_0-\eta)$: 
\begin{equation}\label{eq_pr_lim_inner}
|  <A\nabla u, u \nabla \psi^S_{\eta,\zeta} >|
   \lesssim -\int_{V^\delta _{\eta_{_0}}} \int_\eta^{\eta+\zeta}v(x,\theta )\, \psi'_{ \zeta} (\theta -\eta)dx\, d\theta =||v \stt \tilde\psi_\zeta' ||_{L^1(V^\delta_{\eta_{_0}}\times\{\eta\})},
\end{equation}
where $\stt$ stands for the convolution product introduced in (\ref{mol}) and $\tilde\psi_\zeta'$ is the derivative of the function $\tilde\psi_\zeta(s):=\psi_\zeta(-s)$. The right-hand-side of (\ref{eq_pr_lim_inner}) tends to (for almost every $\eta$, see the paragraph right after (\ref{mol}) for explanations)
\begin{equation}\label{eq_pr_lim_inner2}|| v  ||_{L^1(V^\delta_{\eta_{_0}} \times\{\eta\})}= ||u\nabla u  ||_{L^1(V^\delta _\eta)} \le  ||u  ||_{L^2(V^\delta _\eta)} ||\nabla u  ||_{L^2(V^\delta _\eta)}\overset{(\ref{ineq_vshort})}{\lesssim} \eta  ||\nabla u||_{L^2(V^\delta _\eta)}^2,\end{equation}
 establishing (\ref{eq_ste1}) in the case $\dim S<n-1$.
 
  In the case $\dim S=n-1$, we will replace (\ref{eq_lim_entrl}) with 
  \begin{equation}\label{eq_lim_entrl_reg}
  	\limsup_{\zeta \to 0}| <A\nabla u , u \nabla \psi^S_{\eta,\zeta} >|\lesssim  \eta^\frac{1}{2}  \,  ||\nabla u||_{L^2(U^{\delta_\eta}_S)} ||\nabla u||_{L^2(V_\eta^\delta)} ,
  \end{equation}
replacing in its proof (\ref{ineq_vshort}) with (\ref{ineq_vshort_reg}). Namely,    (\ref{eq_pr_lim_inner2}) can be replaced with 
\begin{equation*}|| v  ||_{L^1(V^\delta_{\eta_{_0}} \times\{\eta\})}= ||u\nabla u  ||_{L^1(V^\delta _\eta)} \le  ||u  ||_{L^2(V^\delta _\eta)} ||\nabla u  ||_{L^2(V^\delta _\eta)}\overset{(\ref{ineq_vshort_reg})}{\lesssim} \eta^\frac{1}{2} ||\nabla u||_{L^2(U_S^{\delta _\eta})}||\nabla u||_{L^2(V^\delta _\eta)},\end{equation*}
establishing (\ref{eq_lim_entrl_reg}). Plugging 
 (\ref{eq_psieta_pa_u}), (\ref{eq_lim_entrl_reg}), and  the first inequality of (\ref{ineq_term1})  into (\ref{eq_ippw}), we get, after dividing by  $ || \nabla u||_{L^2(U^{\delta_\eta}_S)} $:
 $$  || \nabla u||_{L^2(U^{\delta_\eta}_S)} \lesssim\eta  ||L u||_{L^2(\omd)} +\eta^\frac{1}{2}||\nabla u||_{L^2(V^\delta_\eta)},$$
 which yields Claim \ref{cla1}.

\begin{claim}\label{ste_2}
There is  $\sigma>0$ such that for  $u\in W^{1,2}( \omd,\pa \omd)$ supported in $U^\delta_S$ and $0<\eta<1$ we have:
 \begin{equation}\label{ineq_short}
 ||\nabla u||_{L^2(U^{\delta_\eta}_S)}\lesssim \eta^{\sigma} ||L u||_{L^2(\omd)},
\end{equation}
where $\delta_\eta$ is as in (\ref{ineq_short_U}).
\end{claim}
Set for simplicity
\begin{equation*}
\lambda(\eta):=||\nabla u||_{L^2(U^{\delta_\eta}_S)}^2 =\int_0^\eta  ||\nabla u||^2_{L^2(V^\delta_\zeta)}d\zeta,
\end{equation*}
   and notice that then $\lambda'(\eta)=||\nabla u||_{L^2(V^\delta _\eta)}^2$ as distributions for almost every $\eta$, so that  (\ref{eq_ste1})  reads for $\eta<\eta_0$
\begin{equation*}\lambda(\eta)\le C \eta   ||L u||^2_{L^2(\Omega)}+C\eta \lambda'(\eta).
\end{equation*}
 We may assume $C>2$, and hence $c:=\frac{1}{C}<\frac{1}{2}$. Dividing the above inequality by $C\eta^{c+1}$, we obtain
\begin{equation*}c\,\frac{\lambda(\eta)}{\eta^{c+1}}-\frac{ \lambda'(\eta)}{\eta^c}\le \eta^{-c} ||L u||^2_{L^2(\Omega)}.
\end{equation*}
Integrating on $[\eta,\eta_0]$, we get for almost every $0<\eta< \eta_0$
\begin{equation*}\frac{ \lambda(\eta)}{\eta^c}\le  \frac{\lambda(\eta_0)}{\eta_0^c} +\frac{\eta_0^{1-c}}{1-c}\, ||L u||^2_{L^2(\Omega)},
\end{equation*}
which yields Claim \ref{ste_2} for $\eta<\eta_0$ with $\sigma=\frac{c}{2}$ (since $\lambda(\eta_0) \overset{(\ref{eq_Delta_isom})}{\lesssim}  ||L u||^2_{L^2(\omd)} $). For $\eta\in[\eta_0,1]$, the desired fact follows from (\ref{eq_Delta_isom}).




We are now ready to establish (\ref{ineq_short_w}).
Note that thanks to the frontier condition, taking $\delta\in \D^+(S)$ smaller if necessary (making $\delta$ smaller does not affect (\ref{ineq_short}), which is what we need), we can assume $S$ to be the only stratum of dimension $\le k$  meeting $U^\delta_S$. Hence, for such $\delta$, if $\gamma:(0,a) \to\omd\cap U^\delta_S$ is a subanalytic arc such that $\lim_{s\to 0}\rho_k(\gamma(s))= 0$  then  $\gamma(s)$ goes to a point of $\adh S$ as $s\to 0$, which means that $d(\gamma(s),S)$ tends to zero. Thus, for such $\delta$, by \L ojasiewicz's inequality, there are $C$ and $N$ such that    for $x\in \omd\cap U^\delta_S$ \begin{equation}\label{ineq_loj_rho_k_dist}d(x,S)^{N}\le C\rho_k(x).\end{equation}
We will also assume $\delta<1$, so that if for $i\in \N$ we set $W_i:=\{x\in U^\delta_S\cap \omd: 2^{-(i+1)}<d(x,S)<2^{-i}\} $ and if $\sigma$ is as in (\ref{ineq_short}), we have
\begin{equation*}
 || \rho_{k}^{-\frac{\sigma}{2N}}\nabla u||^2_{L^2( U^\delta_S\cap \omd)}= \sum_{i\in \N}   ||\rho_{k}^{-\frac{\sigma}{2N}}\nabla u||_{L^2(W_i)}^2
 \overset{(\ref{ineq_loj_rho_k_dist})}{\lesssim}  \sum_{i\in \N}   ||d(\cdot,S)^{-\frac{\sigma}{2}}\nabla u ||_{L^2(W_i)}^2 \end{equation*}
 \begin{equation*}
 \le \sum_{i\in \N}   ||\nabla u ||_{L^2(W_i)}^2 2^{(i+1)\sigma}\overset{(\ref{ineq_short})}\lesssim\sum_{i\in \N}   ||L u ||_{L^2( U^\delta_S\cap \omd)}^2   2^{(1-i)\sigma} \lesssim  ||L u ||_{L^2( U^\delta_S\cap \omd)}^2   ,
\end{equation*}
establishing (\ref{ineq_short_w}) with $\mu:=\frac{\sigma}{2N}$.
\end{proof}


\subsection{Proof of Theorem \ref{main}.}We are now ready to show the main result of this article, admitting Proposition \ref{pro_vshort}. For this purpose, we are going to prove
by decreasing induction on $k$, that for every integer $-1\le k\le n-1$ and each $N\in \N$ sufficiently large there are  $C>0$ and $p>2$ such that for all  $u\in W^{1,2} (\omd,\pa \omd)$  we have
\begin{equation}\label{eq_west_k}
||\rho_k^N\nabla u||_{L^{p}(\omd)}\le C||Lu||_{L^2(\omd)},
\end{equation}
 with $\rho_k$ given by (\ref{eq_weights}) and by convention $\rho_{_{-1}}\equiv 1$. The statement of the theorem   follows from the case $k=-1$ and Poincar\'e inequality.

\begin{step}\label{step_n}
 We prove (\ref{eq_west_k}) in  the case $k=n-1$. \end{step}
 Define a cut-off function by setting for $x\in \omd$, $\lambda_\eta (x):=\rho_{n-1}^{2} (x)(1-
 \psi_\eta (\rho_{n-1}(x)))$, where $\psi_\eta$ is as in (\ref{eq_psieta}). By  (\ref{eq_psi_eta_supp}), we have $\supp\,\nabla (\psi_\eta\circ \rho_{n-1})\subset \{x\in \omd :\rho_{n-1} (x)<\eta\},$
 which, thanks to the estimate of the derivatives of $\psi_\eta$ given in (\ref{eq_psieta_ineq}), means that $\lambda_\eta'$ and $\lambda_\eta''$ are bounded  independently of $\eta$.
 Since $A\in W^{1,\infty}(\omd)^{n\times n}$, we have on $\omd$ for $u\in W^{1,2}(\omd,\pa \omd)$
 \begin{eqnarray}\label{eq_Deltau_wn}
| L (\lambda_\eta u)|\lesssim |\lambda_\eta L u| +| \nabla \lambda_\eta |\cdot (|u|+|\nabla u|) +  |u \cdot L \lambda_\eta  |, \end{eqnarray}
which, by (\ref{eq_Delta_isom}), yields  \begin{equation}\label{eq_L_phi_rho_u}
              ||L(\lambda_\eta u)||_{L^2(\omd)} \lesssim ||L u||_{L^2(\omd)}.
             \end{equation}
As $\lambda_\eta u$ is compactly supported in $\omd$ and coincides with $\rho_{n-1}^2 u$ on some open set $U_\eta$ containing $\rho_{n-1} \ge \eta$, we can write
\begin{equation}\label{eq_rho2u}
||\rho_{n-1}^2 u||_{W^{1,p}(U_\eta)} \le  ||\lambda_\eta u||_{W^{1,p}(\Omega)} \overset{(\ref{eq_cpctly_supp_meyers})}{\lesssim}    ||L (\lambda_\eta u)||_{L^2(\omd)}\overset{(\ref{eq_L_phi_rho_u})}{\lesssim}  ||L  u||_{L^2(\Omega)},
\end{equation}
with a constant independent of $\eta$. As $\rho_{n-1}\in W^{2,\infty}(\omd)$ and $\rho^3 _{n-1}\nabla u=\rho _{n-1}\nabla (\rho^2 _{n-1}u)-2\rho^2 _{n-1}u\nabla  \rho_{n-1}$, we see that
 \begin{equation*}\label{eq_1jet_wghted}
 ||\rho^3 _{n-1}\nabla u||_{L^{p}(\Omega)} \lesssim  ||\rho_{n-1}^{2} u||_{W^{1,p}(\Omega)} .
\end{equation*}
 Since $\bigcup U_\eta=\omd$, (\ref{eq_rho2u}) therefore entails that
\begin{equation}
 ||\rho^3 _{n-1} \nabla u||_{L^{p}(\Omega)} \lesssim  ||L  u||_{L^2(\Omega)},\end{equation}
which completes the first step of the induction, with $N=3$.


Let us now fix $k<n$, assume (\ref{eq_west_k}) to be true for this value of $k$, and take a $k$-dimensional stratum $S$.
We first establish the desired estimate on $U_S^\delta$ for some  $\delta\in \D^+(S)$ small. More precisely:

\begin{step}\label{step_ineq}
We  show that there is  $\delta_S \in \D^+(S)$ such that for every $N\in \N$ sufficiently large there are $C>0$ and $q>2$ such that for all $u\in W^{1,2}(\omd,\pa \omd)$
\begin{equation}\label{eq_rec_udelta}
 ||\rho_{k-1}^N\nabla u||_{L^{q}(U^{\delta_S} _S\cap \omd)}\le C||L u||_{L^2(\omd)}.
\end{equation}
\end{step}
The idea of the proof of (\ref{eq_rec_udelta}) is to replace (\ref{eq_cpctly_supp_meyers}), used in the proof of step \ref{step_n}, with the induction hypothesis. Take $\delta\in \D^+(S)$. We will assume  $\delta<d(\cdot,\R^n\setminus U_S)$ ($U_S$ being the domain of our local retraction $\pi_S$), which entails that the topological frontier of $U^\delta_S$ in $U_S$  coincides with $\{x\in U_S:d(x,S)=\delta (\pi_S(x))\}$.
Moreover,  we will assume $\delta$ to be smaller than $d(\cdot ,\adh S\setminus S)$ and to the one provided by Lemma \ref{lem_short}. 

Because the inequality given by this lemma only holds for functions that are supported in $U^\delta_S$, we need to introduce some cut-off functions. Applying \L ojasiewicz's inequality to the functions $\delta\circ \pi_S$ and  $\rho_{k-1|U_S^\delta}$ (the required condition for this inequality to hold is satisfied if $\delta <d(\cdot ,\adh S\setminus S)$ thanks to the frontier condition), we   see that there are $d\in \N$ and  a constant $C$ such that for all $x\in U^\delta _S$ \begin{equation}\label{eq_Loj_tub}
\rho_{k-1} ^{d}(x)\le C \delta (\pi_S(x)).                                                                                                                                                                                                     \end{equation}
 Define then a smooth function by setting for $x\in U_S^\delta$
 $$\psi_{S}(x):=\psi\left(\frac{2Cd(x,S)}{\rho_{k-1}^d(x)}\right) ,$$
 where $\psi$ is as in (\ref{eq_psieta}).
  Because (\ref{eq_Loj_tub}) forces $\frac{2Cd(x,S)}{\rho_{k-1}^{d}(x)}\ge 1$ for all $x \in U^\delta _S\setminus U^\frac{\delta}{2}_S$ and since $\psi$ is supported in $[0,\frac{3}{4}]$,  $\psi_{S}$ may be smoothly extended by $0$ on $\omd\setminus U_S^\delta$ (at frontier points of $U_S^\delta$, we have $ d(x,S)=\delta (\pi_S(x))$, see the paragraph right after (\ref{eq_rec_udelta})).
 Set   for $N\in \N$,
 $$u_{S,N}:=\rho_{k-1}^N\cdot \psi_{S}\cdot u,$$ and notice that,
since (by a straightforward computation of derivative) $|\nabla\psi_{S,N}| \lesssim \rho_{k-1}^{-d}$ and $|D\nabla\psi_{S,N}| \lesssim  \rho_{k-1}^{-2d}$, similarly as in (\ref{eq_Deltau_wn}) we have for $N$ large enough
\begin{equation}\label{eq_Deltau_wn_k}
 |L u_{S,N}| \lesssim  (|u|+|\nabla u|+ |L u|).
\end{equation}

Let $p$ stand for the number provided by the induction hypothesis (see (\ref{eq_west_k})), and set $\nu:=\frac{\mu}{2}$, where $\mu>0$ is the number appearing in (\ref{ineq_short_w}). Applying H\"older's inequality to the functions $|\nabla u_{S,N}|^\theta \rho_k^\nu$ and $ |\nabla u_{S,N}|^{1-\theta} \rho_k^{-\nu}$, with $q\in (2,p)$ and $\theta\in (0,1)$ such that  $\frac{\theta}{p}+\frac{1-\theta}{2}=\frac{1}{q}$, we get
\begin{eqnarray}\label{ineq_nabla_usn_Lq}
 ||\nabla u_{S,N}||_{L^q(U_S^\delta)} &\le& \left(\int_{U_S^\delta} |\nabla u_{S,N}|^{p} \rho_k^\frac{\nu p}{\theta} \right)^\frac{\theta}{p}\cdot \left(\int_{U_S^\delta} |\nabla u_{S,N}|^{2} \rho_k^{-\frac{2\nu}{1-\theta}} \right)^\frac{1-\theta}{2}\nonumber\\
 &=& ||\rho_k^\frac{\nu}{\theta}\nabla u_{S,N} ||_{L^{p}(U_S^\delta)}^\theta\cdot || \rho_k^{-\frac{\nu}{1-\theta}}\nabla u_{S,N} ||_{L^2(U_S^\delta)}^{1-\theta}.
\end{eqnarray}
For $\theta$ small enough (i.e. for $q>2$ close enough to $2$), applying the induction assumption to $u_{S,N}$ we see that  \begin{equation}\label{eq_west_usN}
||\rho_k ^{\nu/\theta}\nabla u_{S,N}||_{L^{p}(U_S^\delta)}\lesssim ||L u_{S,N}||_{L^2(\omd)}.
\end{equation}
Moreover, for $\theta\le \frac{1}{2}$ (which entails $\frac{\nu}{1-\theta}\le 2\nu= \mu$), applying (\ref{ineq_short_w}) to $u_{S,N}$  we get
\begin{equation}\label{eq_usN_short}
||\rho_k^{-\nu/(1-\theta)} \nabla u_{S,N} ||_{L^2(U_S^\delta)}\lesssim||L u_{S,N}||_{L^2(\omd)}.
\end{equation}
Plugging (\ref{eq_west_usN}) and  (\ref{eq_usN_short}) into (\ref{ineq_nabla_usn_Lq}) we get
\begin{eqnarray*}
 ||\nabla u_{S,N}||_{L^q(U_S^\delta)}\lesssim ||L u_{S,N}||_{L^2(\omd)}\overset{(\ref{eq_Deltau_wn_k})}{\lesssim } ||L u||_{L^2(\omd)}.
 \end{eqnarray*}
 Since $\psi_{S}$ is equal to $1$ on  some definable neighborhood of $S$, this   completes step \ref{step_ineq}.

 \begin{step}
  We now perform the induction step for our fixed value of $k$.
 \end{step}

 Step \ref{step_ineq} provides  for each $k$-dimensional stratum  $S$ a function $\delta_S\in \D^+(S)$ for which  (\ref{eq_rec_udelta}) holds. Furthermore, it follows from \L ojasiewicz's inequality that there are  $c\in \N$ and $C>0$ such that on  $W:=\omd\setminus \bigcup \{  U_S^{\delta_S}:S\in \Sigma, \dim S=k\}$ we have
 \begin{equation}\label{eq_loj_rho}\rho^{c} _{k-1} \le C \rho_k .\end{equation}
 Hence, if $p$ is the number given by the induction assumption, we can write for $N$ sufficiently large
and $u\in W^{1,2}(\omd,\pa \omd) $
$$||\rho_{k-1}^{c N}\nabla u||_{L^{p}(W)}\overset{(\ref{eq_loj_rho})}\lesssim ||\rho_k^N\nabla u||_{L^{p}(W)} \overset{(\ref{eq_west_k})}\lesssim ||Lu||_{L^2(\omd)},$$
which, together with  (\ref{eq_rec_udelta}) (for each $k$-dimensional stratum $S$ of $\Sigma$), completes our induction step.

\section{Equisingularity of the tangent cone of $(w)$-stratified sets}\label{sect_equi}
This section presents some results about Kuo-Verdier stratifications that will be useful to establish (\ref{ineq_vshort}) in the next section. The main purpose is to establish formulas  (\ref{eq_lim_f_t}) and (\ref{eq_trivialite_cone_normal}), which are closely related to the so-called normal pseudoflatness of $(w)$-regular stratifications \cite{hir, orrtro}. 
We fix for all this section a $(w)$-regular stratification $\Sigma$ of a closed definable subset $X$ of $\R^n$. We will assume that strata are connected.


\subsection{Rugose vector fields}\label{sect_rugose} Let $S\prec S'$ be a couple of  strata of $\Sigma$ and let $\xo\in S$.  It directly follows from the definition of the $(w)$ condition (more details are given below) that  each smooth tangent vector field  on $S$, say $v:S\to TS$, has an extension $\tilde v:S'\cup S\to \R^n$, tangent to $S'$ on $S'$ and  satisfying for some constant $C$, for all $x\in S'$ and $y \in S$ in the vicinity of $\xo$:
 \begin{equation}\label{eq_rugose}
 |\tilde v(x)-\tilde v(y)|\le C |x-y|.\end{equation}
    Vector fields satisfying such an inequality are called {\bf rugose} \cite{ver}. This is a Lipschitz type condition but $y$ has to be in $S$ while $x\in S'$. We here stress the fact that the constant depends on $\xo$. 
 
 The  $(w)$ condition thus ensures that we can extend smooth tangent vector fields on a stratum to rugose tangent vector fields on a neighborhood of a point. This vector field can be defined by $\tilde v(x):=P_x(\hat v(x))$, where $\hat v$ is a smooth vector field (not necessarily tangent to strata)  extending $v$ to $S\cup S'$   and $P_x$ the orthogonal projection onto  $T_xS'$.  When there are more than two strata, one can extend a given vector field to higher dimensional strata by induction on the dimension of the strata. We refer to \cite{ver} or \cite{orrtro} for more, and indeed  give some more details in the proof of Proposition \ref{pro_ppn} below, which relies on these techniques.   The difference is however that we will require in addition  the rugose vector field to be orthogonal to $\nabla d(\cdot,S)$ in order to obtain integral curves that preserve the distance to $S$. This extra fact, which will be useful to study equisingularity of sections by spheres  along strata (see (\ref{eq_d_h_link})), is our motivation for proving Lemma \ref{lem_angles}.
 

\subsection{Continuity of the normal cone along strata.}\label{sect_normal_cone}  Given $S\in \Sigma$,   we set for $t\in S$
\begin{equation}\label{eq_cone_normal}
\cbf^S_t (X):=\cbf_t ( \pi_S^{-1}(t)\cap X) \et  \cbt^S_t (X):=\cbt_t ( \pi_S^{-1}(t)\cap X),
\end{equation}
and call $\cbf^S_t (X)$ the {\bf normal cone} of $X$ along $S$ at $t$.

Before entering our study of the variation of the normal cone along strata, let us recall a well-known property of the tangent cone. Given $t\in X$ and $r>0$, set ($\lbf$ stands for ``link'')
\begin{equation*}
\lbf_{r,t}(X):=\{\frac{x-t}{r}: |x-t|=r, x\in X \}\subset \sph^{n-1}.
\end{equation*}
It is a well-known fact (justified just below)  that subanalytic germs cannot oscillate in the radial direction, in the sense that for each $t\in X$
\begin{equation}\label{eq_lim_f_fibre}
	\lim_{r\to 0}	d_\hn(\lbf_{r,t}(X),\cbt_t(X))=0.
\end{equation}
 We shall show  that the $(w)$ condition is sufficient to ensure that (\ref{eq_lim_f_fibre}) is also true about the normal cone (see (\ref{eq_lim_f_t})).
This issue was studied in
\cite{hir, orrtro}, and we will rely on the same method.

\begin{proof}[Proof of (\ref{eq_lim_f_fibre})]
 Let $r_i>0$ be a sequence tending to zero and let $w_i\in \cbt_t(X)$ be such that  $d_\hn(\lbf_{r_i,t}(X),\cbt_t(X))= d(w_i,\lbf_{r_i,t}(X))$. Extracting a sequence if necessary, we can assume that   $w_i$  tends to some  $w$. Since $w\in \cbt_t(X)$, it follows from Curve Selection Lemma \cite[Proposition $2.2.4$]{livre} or \cite[Theorem $3.2$]{cos}, that there is a definable arc $\gamma:(0,\ep)\to X$ tending  to $t$ at $0$ such that $\lim_{s\to 0} \frac{\gamma(s)-t}{|\gamma(s)-t|}=w$. We can parameterize this arc by its distance to $t$, so that $\frac{\gamma(r)-t}{|\gamma(r)-t|}\in \lbf_{r,t}(X)$ for all $r>0$ small, which entails that
$$d_\hn(\lbf_{r_i,t}(X),\cbt_t(X))= d(w_i,\lbf_{r_i,t}(X))\le |w_i-w|+ |w- \frac{\gamma(r_i)-t}{|\gamma(r_i)-t|}|, $$
which tends to zero.
\end{proof}

 We will need a lemma about angles between transverse sections of vector spaces with a hyperplane.

\begin{lem}\label{lem_angles}
For any pair of vector subspaces $P$ and $Q$ of $\R^n$ we have for all $v\in \sph^{n-1}\setminus Q^\perp$
$$\angle (P\cap v^\perp, Q\cap v^\perp)\le 2\;\frac{\angle(P,Q)}{d(v,Q^\perp) } .$$
\end{lem}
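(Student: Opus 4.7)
The plan is to bound $d(u,Q\cap v^\perp)$ for an arbitrary unit vector $u\in P\cap v^\perp$, starting from the $P\to Q$ approximation supplied by $\angle(P,Q)$ and then projecting the result back into $v^\perp$ without leaving $Q$. First I would choose $u'\in Q$ realizing $d(u,Q)$, so that $|u-u'|\le\angle(P,Q)$. The point $u'$ will typically fall outside $v^\perp$, and the idea is to subtract from $u'$ a suitable multiple of a well-chosen unit vector $z\in Q$ that has as large a component along $v$ as possible.

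The natural candidate is the normalized orthogonal projection of $v$ onto $Q$: setting $z:=\pi_Q(v)/|\pi_Q(v)|$ gives $|z|=1$, $z\in Q$, and
\[
z\cdot v \;=\; \frac{\pi_Q(v)\cdot v}{|\pi_Q(v)|} \;=\; |\pi_Q(v)| \;=\; d(v,Q^\perp),
\]
since $v-\pi_Q(v)\in Q^\perp$. This is exactly where the denominator of the inequality appears. Define $\lambda:=(u'\cdot v)/d(v,Q^\perp)$ and $w:=u'-\lambda z$. Then $w\in Q$ and $w\cdot v=u'\cdot v-\lambda\,(z\cdot v)=0$, so $w\in Q\cap v^\perp$ and in particular $d(u,Q\cap v^\perp)\le |u-w|$.

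It remains to estimate $|u-w|$. Using $u\in v^\perp$ and $|v|=1$ one has
\[
|u'\cdot v| \;=\; |(u'-u)\cdot v| \;\le\; |u'-u| \;\le\; \angle(P,Q),
\]
so $|\lambda|\le \angle(P,Q)/d(v,Q^\perp)$, and therefore
\[
|u-w| \;\le\; |u-u'|+|\lambda|\,|z| \;\le\; \angle(P,Q)\Bigl(1+\frac{1}{d(v,Q^\perp)}\Bigr).
\]
Since $d(v,Q^\perp)\le |v|=1$, the factor in parentheses is at most $2/d(v,Q^\perp)$, yielding the claimed bound after taking the supremum over $u\in P\cap v^\perp$ with $|u|\le 1$.

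There is no serious obstacle: the only idea needed is the choice of $z$ as the normalized projection of $v$ onto $Q$, which maximizes $z\cdot v$ among unit vectors of $Q$; the factor $d(v,Q^\perp)$ then emerges automatically, and the remaining estimates are immediate from the triangle inequality and the definition of $\angle(P,Q)$.
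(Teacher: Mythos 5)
Your proof is correct and takes essentially the same approach as the paper: the point $w=u'-\lambda z$ you construct is exactly $\pi_{Q\cap v^\perp}(u)$, and the paper's proof estimates $|u-\pi_{Q\cap v^\perp}(u)|$ by the same triangle-inequality split with the same use of $z=\pi_Q(v)/|\pi_Q(v)|$ and the identity $|\pi_Q(v)|=d(v,Q^\perp)$.
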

\begin{proof}We denote by $\pi_V$ the orthogonal projection onto a space $V$. Take $v\in \sph^{n-1}\setminus Q^\perp$ and
a unit vector $u$ in $P\cap v^\perp$. We have
\begin{eqnarray}\label{eq_u_moins_piqu}
|u-\pi_{Q\cap v^\perp}(u)| &\le& |u-\pi_{Q}(u)| +|\pi_Q(u)-\pi_{Q\cap v^\perp}(u)|
\nonumber\\
&\le& \angle(P,Q)+ |<\frac{\pi_Q(v)}{|\pi_Q(v)|},\pi_{Q}(u)>|,
\end{eqnarray}
for $\pi_Q(u)-\pi_{Q\cap v^\perp}(u)$ is the projection of $\pi_Q(u)$ onto the orthogonal complement of $Q\cap v^\perp$ in $Q$, which is the line generated by $ \frac{\pi_Q(v)}{|\pi_Q(v)|}$. Observe now that
 $$ |<\frac{\pi_Q(v)}{|\pi_Q(v)|},\pi_{Q}(u)>|=|<\frac{ v}{|\pi_Q(v)|},\pi_{Q}(u)>|=\frac{ |<v,u-\pi_{Q}(u)>|}{|\pi_Q(v)|} \le \frac{\angle(P,Q)}{d(v,Q^\perp)}, $$
 for $|u-\pi_{Q}(u)|\le \angle(P,Q)$ and $|\pi_Q(v)|=d(v,Q^\perp)$. Plugging the just above inequality into (\ref{eq_u_moins_piqu})  yields the desired fact.
\end{proof}

The following proposition will establish stability of the normal link  (see (\ref{eq_d_h_link})) and  triviality of the normal cone  along strata (see (\ref{eq_trivialite_cone_normal})).

\begin{pro}\label{pro_ppn}Given $t_0\in S\in \Sigma$, there   are a number  $\ep>0$, a  neighborhood $W$ of $t_0$ in $S$, and a homeomorphism $$h:(\pi_S^{-1}(t_0)\cap U_S^\ep)\times W\to \pi_S^{-1}(W)\cap U_S^\ep$$
satisfying
\begin{enumerate}[(i)]
	  \item $\pi_S( h(x,t))=t$ and  $d(h(x,t),S)=d(x,S)$, for all $(x,t)$.
	
 \item\label{item_approx_id} There is  $C$ such that for all $x\in \pi_S^{-1}(t_0)\cap U_S^\ep$ we have $|h(x,t)-x|\le Cd(x,S)\cdot |t|$, for all $t$  in $W$.

 \item\label{item_H_preserve_les_strates} $h$ preserves the strata, i.e. $h(x,t)\in Y$ for all $t\in W$ if $x\in Y\cap \pi_S^{-1}(t_0)\cap U_S^\ep$, $Y \in \Sigma$.
\end{enumerate}
\end{pro}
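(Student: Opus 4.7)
The plan is to construct $h$ as the composition of flows of $k=\dim S$ rugose vector fields, each extending a coordinate direction on $S$. After shrinking $W$ I may identify it with a neighborhood of $0\in\R^k$ via an orthonormal frame $e_1,\dots,e_k$ for $TS|_W$, with $t_0$ corresponding to $0$.

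For each $j$, I construct a vector field $v_j$ on $\pi_S^{-1}(W)\cap U_S^\ep$, tangent to every stratum $S'\succeq S$ meeting this set, satisfying: (a) $v_j|_S\equiv e_j$; (b) $d\pi_S(v_j)=e_j$, so its flow translates the base coordinate; (c) $v_j\perp\nabla d(\cdot,S)$ off $S$, so the flow preserves $d(\cdot,S)$; (d) the rugose estimate $|v_j(x)-e_j|\le C\,d(x,S)$. The construction follows the Kuo--Verdier rugose extension machinery recalled in Section \ref{sect_rugose}, but with the orthogonality constraint (c) built in: on each higher-dimensional stratum $S'$ one defines $v_j(x)$ as an appropriate projection of $e_j$ into $T_xS'\cap\nabla d(\cdot,S)(x)^\perp$. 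The $(w)$-condition provides $\angle(T_{\pi_S(x)}S,T_xS')=O(d(x,S))$, so $\Pi_{T_xS'}(e_j)$ is within $O(d(x,S))$ of $e_j$; Lemma \ref{lem_angles} then certifies that the subsequent slicing by $\nabla d(\cdot,S)(x)^\perp$ preserves the rugose bound, yielding (d). Matching values across strata proceeds by induction on $\dim S'$ in the standard fashion.

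The flow $\Phi_j^s$ of $v_j$ is well-defined and continuous on $\pi_S^{-1}(W)\cap U_S^\ep$ (smooth within each stratum, continuous across strata by rugosity, cf.\ \cite{ver}). By (b), $\pi_S\circ\Phi_j^s=\pi_S+se_j$; by (c), $d(\cdot,S)$ is constant along orbits; by tangency each $\Phi_j^s$ preserves strata. Define
$$h(x,t):=\Phi_1^{t_1}\circ\cdots\circ\Phi_k^{t_k}(x),\qquad t=(t_1,\dots,t_k)\in W.$$
Bijectivity and bicontinuity follow from invertibility of the flows (time reversal), (i) and (\ref{item_H_preserve_les_strates}) are immediate, and for (\ref{item_approx_id}) integrating (d) along the $v_j$-orbit (along which $d(\cdot,S)$ remains equal to $d(x,S)$) gives $|\Phi_j^s(x)-x-se_j|\le C\,d(x,S)\,|s|$; composing the $k$ flows then yields the stated estimate.

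The main obstacle is the simultaneous enforcement of tangency to each $S'$, orthogonality to $\nabla d(\cdot,S)$, and rugosity. The orthogonality requirement is the genuinely new ingredient beyond standard Verdier trivialization (which motivates Lemma \ref{lem_angles}); the lemma is precisely the tool for controlling how slicing the tangent space by $\nabla d(\cdot,S)^\perp$ interacts with the tangent spaces of neighboring strata, so that rugosity survives. A secondary technical point is the well-posedness of flows of merely rugose vector fields, handled by classical stratified ODE arguments.
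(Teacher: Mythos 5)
Your approach is essentially the paper's: build rugose vector fields tangent to the strata and orthogonal to $\nabla d(\cdot,S)$ by downward projection (using the $(w)$-condition plus Lemma \ref{lem_angles} to preserve rugosity under slicing by $\nabla d(\cdot,S)^\perp$), then compose their flows to define $h$. However, there is one genuine gap in your construction of the fields.

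You list among the desired properties of $v_j$ the condition (b) $D_x\pi_S(v_j(x))=e_j$, and you assert it is delivered by ``an appropriate projection of $e_j$ into $T_xS'\cap\nabla d(\cdot,S)(x)^\perp$.'' It is not: the orthogonal projection of $e_j$ onto $T_xS'\cap\nabla d(\cdot,S)(x)^\perp$ is tangent to $S'$, annihilates $\nabla d(\cdot,S)$, and is rugose, but its image under $D_x\pi_S$ equals $e_j$ only up to an $O(d(x,S))$ error, not exactly. Exactness is essential, because it is precisely what makes $\pi_S\circ\Phi_j^s$ equal (not merely close) to translation by $se_j$ on $S$, which in turn gives property (i), $\pi_S(h(x,t))=t$. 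The paper resolves this with a separate step you have omitted: starting from rugose fields $v_1,\dots,v_k$ that are tangent to strata and $\perp\nabla d(\cdot,S)$ (but \emph{not} assumed $\pi_S$-equivariant), it replaces them by linear combinations $w_i(x)=\sum_j\lambda_{ij}(x)v_j(x)$, where the $\lambda_{ij}(x)$ are the coordinates of $v_i(\pi_S(x))$ in the basis $D_x\pi_S(v_1(x)),\dots,D_x\pi_S(v_k(x))$; this enforces $D_x\pi_S(w_i(x))=w_i(\pi_S(x))$ exactly while preserving tangency, the orthogonality to $\nabla d(\cdot,S)$, and rugosity (since $x\mapsto D_x\pi_S(v_j(x))$ is itself rugose). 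Without this correction your $h$ only satisfies $\pi_S(h(x,t))=t$ approximately, which is not what the proposition asserts. Aside from this, your outline is sound, and your flow estimate (integrating the rugose bound along an orbit on which $d(\cdot,S)$ is constant, then Gr\"onwall-type control for continuity and confinement) matches the paper's argument.
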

\begin{proof}
The proof relies on classical arguments for proving normal pseudo-flatness \cite{hir,orrtro}. 
 For simplicity, we will assume $t_0=\orn$. Let $g:W\to \R^k, g=(g_1,\dots,g_k)$ be a local coordinate system of $S$ at $0$, where $k=
 \dim S$.  We will regard the connected components of the complement of $X$ as extra strata.
 
 We start by constructing a local basis  of rugose vector fields  $v_i:U_S^\ep \cap \pi^{-1}_S(W)\to \R^n$, $i=1,\dots,k$, $\ep>0$ small (in the sense that $v_1(t),\dots,v_k(t)$ is a basis of $T_tS$ for every $t\in W$), tangent to the strata and smooth on strata, and satisfying $\nabla d(\cdot,S)\cdot v_i\equiv 0$ on $U_S^\ep \cap \pi^{-1}_S(W)\setminus S$ for all $i$. We set $U_S^{\ep,l}:=\bigcup \{Y\cap U_S^\ep :\dim Y\le l, Y\in \Sigma \}$   and we will define the $v_i$'s on  $U^{\ep,l}_{S}\cap \pi^{-1}_S(W)$   by induction on $l$. For $\ep$ small enough, $U_S^\ep$ only meets $S$ and strata $Y\succ S$. We thus can start our induction by $v_i:= \nabla g_i$ on $U_S^{\ep,k}\cap W =W$.

 Assume that the $v_i$'s have been defined on $ U^{\ep,l}_{S}\cap \pi^{-1}_S(W)$ and let us extend them to  $U^{\ep,l+1}_{S}\cap \pi^{-1}_S(W)$. Kuo-Verdier's $(w)$ condition implies Whitney's $(b)$ condition (for subanalytic stratifications, see \cite{kuo} or \cite[Proposition $2.6.3$]{livre}), which means that for each stratum  $Y\succ S$,  $d(\frac{x-
 	\pi_S(x)}{|x-
 	\pi_S(x)|}, T_xY)$ tends to zero as $x\in Y$ tends to a point of $S$. As a matter of fact,  Lemma \ref{lem_angles} and the $(w)$ condition imply that for every $Y\succeq S$, each $y_0\in Y$ has a neighborhood such that   for  $y\in Y$ and  $ x \in Y'\succ Y$ in this  neighborhood
 \begin{equation}\label{eq_w_cap_H}
 	\angle (T_yY\cap (y-
 	\pi_S(y))^\perp,T_xY'\cap (x-
 	\pi_S(x))^\perp)\lesssim |x-y|.\end{equation}  
Now, if $Y\succ S$ is a stratum of dimension not greater than $l$ and if $x\in U^{\ep,l+1}_{S}\setminus  U^{\ep,l}_{S}$ is close to $Y$, we set $\tilde v_{i,Y}(x):=P_x(\hat{v}_i(x))$, where   $\hat{v}_i$ is a  smooth extension of $v_{i|Y}$  to a neighborhood of $Y$ (not necessarily tangent to the strata) and $P_x$ is the orthogonal projection onto $(x-
\pi_S(x))^\perp\cap T_x Y'$, $Y'$ being the $(l+1)$-dimensional stratum containing the point $x$. 
  Inequality (\ref{eq_w_cap_H})  yields the rugosity of $\tilde v_{i,Y}$ for each such stratum $Y$ (we recall that the constant of the rugosity condition is local), that we can glue together using a partition of unity to complete the induction step.

 We shall now produce a basis having the same properties as the $v_i$'s as well as extra properties. Namely, we are going to construct vector fields  $w_i: \pi_S^{-1}(W)\cap U_S^\ep  \to \R^n  $, $i=1,\dots,k$, $\ep>0$ small, tangent to the strata, smooth on strata, and satisfying for all $i$
\begin{enumerate}
\item\label{item_rho_controlled} $\nabla d(\cdot,S)\cdot w_i\equiv 0$.
\item\label{item_pi_controlled} $D_x\pi_S(w_i(x))=w_i(\pi_S(x))$, for all $x\in \pi_S^{-1}(W)\cap U_S^\ep $.
\item\label{item_e_i} $w_{i}(t)\cdot \nabla g_i(t)=\delta_{ij}$ (Kronecker symbol)  for all $t\in W$.
\end{enumerate}
The $w_i$'s will indeed will be obtained as  linear combinations of the $v_i$'s (condition (\ref{item_rho_controlled}) is preserved by linear combinations).
 Namely, since  $x\mapsto D_x\pi_S(v_j(x))$ is rugose for each $j$,
 if we set $w_i(x):=\sum_{i=1}^k \lambda_{ij}(x)v_j(x)$, where the $\lambda_{ij}(x)$'s are the
 coordinates of $v_i(\pi_S(x))$ in the basis $D_x\pi_S (v_1(x)),\dots,D_x\pi_S (v_k( x))$, i.e. $v_i(\pi_S(x))=\sum_{i=1}^k \lambda_{ij}(x)D_x\pi_S (v_j(x))$, we get $w_1,\dots,w_k$ satisfying (\ref{item_rho_controlled}) and (\ref{item_pi_controlled}). As  $\nabla g_1(t),\dots, \nabla g_k(t)$ form a basis of $T_t S$ for all $t\in W$, a similar argument applies to get a basis vector fields satisfying   (\ref{item_e_i}) in addition.

To define the desired homeomorphism, let now $\phi_i(x,t)$ denote the local flow generated by the vector field $w_i$, for each $i\le k$. By Gr\"onwall's Lemma,  the rugosity of the $w_i$'s implies  that for $x\in S'\succ S$ and $t\in S$ close to $0$, and $s$ positive small:
\begin{equation}\label{eq_gronwall}
  |x-\pi_S(x)| \exp (-Cs )\le |\phi_i (x,s)-\phi_i (\pi_S(x),s)|\leq    |x-\pi_S(x)| \exp (Cs ).
\end{equation}
The first inequality  establishes that an integral curve starting at $x\in S'$ may not fall into $S$. The second inequality implies that it stays in a little neighborhood of $S$ if $x$ is chosen sufficiently close to $S$. It also yields that $\phi_i $ is continuous ($\phi_i$ being smooth on strata, its continuity just needs to be checked when $x$ passes from one stratum to another one, see \cite{ver} for more). We finally set for $x\in \pi_S^{-1}(0)\cap U_S^\ep$ and $t\in  W$, 
$$h(x,t):=\phi_1(\phi_2(\dots\phi_k(x,t_k)\dots)t_2)t_1),$$
where $(t_1,\dots, t_k)=g(t)$.
 Condition (\ref{item_rho_controlled}) yields $d (\phi_i(x,t),S)\equiv d(x,S)$ for all $i$, which yields in turn $d(h(x,t),S)=d(x,S)$. Conditions (\ref{item_pi_controlled}) and (\ref{item_e_i}) entail that $\pi_S(h(x,t))=t$ for all $x$ and $t$. Property (\ref{item_approx_id}) follows from  the rugosity of the $w_i$'s, which implies $|w_i(x)-w_i(\pi_S(x))|\lesssim d(x,S)$.
\end{proof}

We are now ready to generalize (\ref{eq_lim_f_fibre}) to the normal cone.
Let us define the {\bf normal link along $S\in \Sigma$} (of radius $r>0$) at $t\in S$  as
\begin{equation*}
	\lbf_{r,t}^S(X):=\{\frac{x-t}{r}: |x-t|=r, x\in \pi_S^{-1}(t)\cap X  \}\subset \sph^{n-1}.
\end{equation*}
 Because the mapping $x\mapsto h(x,t)$ provided by the above proposition preserves the distance to $S$ for every $t$,  it induces a homeomorphism from $\lbf^S_{r,t_0}(X)$ onto $\lbf^S_{r,t}(X)$ (for $r$ small), so that  (\ref{item_approx_id}) of the latter proposition yields for  $r>0$ smaller than some $r_0$ independent of $t\in S$ close to $t_0$
\begin{equation}\label{eq_d_h_link}
	d_\hn (\lbf^S_{r,t}(X),\lbf_{r,t_0}^S(X))\lesssim |t-t_0|.
\end{equation}
Together with (\ref{eq_lim_f_fibre}), this entails that for all $t_0\in S$ we have
\begin{equation}\label{eq_lim_f_t}\lim_{(r,t)\to (0,t_0)} d_\hn (\lbf^S_{r,t}(X),\cbt^S_{t_0}(X))=0.\end{equation}
It is worth here stressing the difference with  (\ref{eq_lim_f_fibre}) because it was the motivation for proving Proposition \ref{pro_ppn}:    (\ref{eq_lim_f_fibre}) only entails that for each $t_0$  \begin{equation*}\lim_{r\to 0} d_\hn (\lbf^S_{r,t_0}(X),\cbt^S_{t_0}(X))=0.\end{equation*}
Note also that  (\ref{item_approx_id}) of Proposition \ref{pro_ppn} also yields for $t\in S$
\begin{equation}\label{eq_trivialite_cone_normal}
 \cbf_t(X)=\cbf^S_t(X)\times T_tS.
\end{equation}


\section{Proof of Proposition \ref{pro_vshort}}\label{sect_pf_vshort}
We first wish to recall the definition of definable families because it will be very important for our purpose.  Given $A\in \s_{m+n}$ and $\tim$, we define the {\bf fiber of $A$ at $t$} as
\begin{equation*}
	A_t:=\{x\in \R^n:(t,x)\in A\}.
\end{equation*}
Any family $(A_t)_\tim$ constructed in this way is called {\bf a definable family of subsets of $\R^n$}. Hence, a definable family of sets is not only a family of definable sets: the fibers  must glue together into a definable set. This fact will be essential in the proofs of Lemma \ref{lem_contient_boule} and Proposition \ref{pro_vshort} to get constants that are independent of  parameters.
\begin{rem}\label{rem_fibres_cell_dec}
	Let $A \in \s_{m+n}$ and let $\C$ be a cell decomposition of $\R^{m+n}$ compatible with $A$. For $t\in \R^m$, let then $\C_t:=\{C_t:C\in \C\}$. It follows from the definition of  cell decompositions that for every $\tim$, $\C_t$ is a cell decomposition of $\R^n$ compatible with $A_t$.  
\end{rem}

\begin{lem}\label{lem_contient_boule}
	Let  $A$ and $B$ in $\s_{m+n}$ with $\sup_{x,y\in B_t,\tim} |x-y|<\infty$.
	For every  $b>0$  there is $\sigma>0$  such that $A_t$ contains a ball of radius $\sigma$  for all $\tim$   satisfying $ d_{\hn}(A_t,B_t)<\sigma$ and $\hn^n(B_t)\ge b$.
\end{lem}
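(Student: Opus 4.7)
My plan is to prove the lemma by a twofold application of a Steiner-type tubular estimate, with both the estimate itself and the $(n-1)$-volumes appearing in it made uniform in the parameter $t$ through the cell-decomposition/uniform-finiteness structure of the definable category. First, after restricting to the relevant parameter set
\[
T_\sigma:=\{t\in\R^m:\hn^n(B_t)\ge b,\,d_{\hn}(A_t,B_t)<\sigma\}
\]
and using a definable selection of a reference point in $B_t$, I translate each fibre so that all $A_t$ and $B_t$ sit in a common ball $\bou(0,R)$. Remark \ref{rem_fibres_cell_dec} then gives a cell decomposition of $\R^{m+n}$ compatible with $A$ and $B$; combined with the uniform bi-Lipschitz triviality of definable families along stratifications (the Hardt-type result \cite[Theorem 7.6.3]{paramreg} already used in the paper), this yields a uniform constant $K>0$ such that $\hn^{n-1}(\pa A_t),\hn^{n-1}(\pa B_t)\le K$ and such that the elementary tubular bound
\[
\hn^n(\{x:d(x,E)<r\})\le \hn^n(E)+2r\,\hn^{n-1}(\pa E)+O(r^2)
\]
holds with the $O$-constant uniform in $t\in T_\sigma$ for $E=A_t,B_t$, or any definable truncation of them.

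\textbf{Step 1 --- a uniform ball in $B_t$.} If $B_t$ contains no ball of radius $r$, every $x\in B_t$ is within $r$ of $\R^n\setminus B_t$, so $B_t\subset \{x:d(x,\pa B_t)<r\}$. The tubular estimate then forces
\[
b\le \hn^n(B_t)\le 2rK+O(r^2),
\]
which fails once $r<cb/K$ for some absolute $c>0$. Hence there is a uniform $r_0=r_0(b,K)>0$ and a point $x_t$ with $\bou(x_t,r_0)\subset B_t$.

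\textbf{Step 2 --- transferring to $A_t$.} Fix $\sigma<r_0/2$. The hypothesis $d_\hn(A_t,B_t)<\sigma$ gives $\bou(x_t,r_0)\subset B_t\subset\{x:d(x,A_t)<\sigma\}$, whence every point of the ball lies within $\sigma$ of $E_t:=A_t\cap\bou(x_t,2r_0)$, so
\[
c_n r_0^n=\hn^n(\bou(x_t,r_0))\le \hn^n(\{x:d(x,E_t)<\sigma\}).
\]
If, for a contradiction, $A_t$ contained no ball of radius $\sigma$, the same would hold for $E_t$, so $E_t\subset\{x:d(x,\pa E_t)<\sigma\}$ and two applications of the tubular estimate (together with $\hn^{n-1}(\pa E_t)\lesssim K$, absorbing the bounded contribution of the sphere $\sph(x_t,2r_0)$) give
\[
c_n r_0^n\le \hn^n(\{x:d(x,E_t)<\sigma\})\le 4\sigma K+O(\sigma^2).
\]
This fails as soon as $\sigma<c_n r_0^n/(8K)$. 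Choosing $\sigma$ smaller than both $r_0/2$ and this threshold concludes the proof.

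\textbf{Main obstacle.} The geometric arithmetic in Steps 1--2 is elementary; the real difficulty, and the reason the definable-family hypothesis is essential, is the uniformity in $t$ of the constant $K$ and of the implicit constant in the tubular estimate. Without it one could easily have $\hn^{n-1}(\pa A_t)\to\infty$ (or the Steiner remainder blowing up) as $t$ varies, and no choice of $\sigma$ depending only on $b$ would work. Proving this uniformity is exactly where one has to invoke uniform finiteness and Lipschitz-triviality of definable families, both standard in the o-minimal setting.
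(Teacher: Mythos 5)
Your approach is genuinely different from the paper's, and the difference is instructive. The paper's proof proceeds by induction on $n$, builds a Lipschitz cell decomposition $\mathcal{P}$ of $\R^{m+n}$ compatible with \emph{both} $A$ \emph{and} $B$ (with uniformly bounded Lipschitz constants), locates a single cell $C\subset B$ carrying a definite fraction of $\hn^n(B_t)$, finds (by the inductive hypothesis applied to $\pi_n(C'_t)$) a ball of uniform radius inside $C_t$, and then uses compatibility with $A$ as the decisive step: once the ball meets $A_t$, the whole cell $C$ must lie in $A$, so $A_t$ inherits the ball. Your argument never invokes compatibility with $A$ at all — it is purely metric/volumetric, trying to force a ball inside $A_t$ by a Steiner-type tube bound. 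That is a real conceptual departure.

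The gap in your argument is the ``elementary tubular bound'' itself. You assert a uniform-in-$t$ estimate
$\hn^n(\{x:d(x,E)<r\}) \le \hn^n(E)+2r\,\hn^{n-1}(\pa E)+O(r^2)$
and say that the uniform $K$ and the uniform $O(r^2)$ constant come from cell decomposition plus bi-Lipschitz triviality of the family. This is not elementary, it is not in the paper's toolbox, and as written it is not a standard black box: (i) a first-order Steiner/Weyl expansion with a clean $2r\hn^{n-1}(\pa E)$ coefficient is a positive-reach phenomenon, not a general one; what is actually available for definable families is a cruder Vitushkin/Yomdin--Comte type bound $\hn^n(\{x:d(x,F)<r\})\lesssim \hn^{n-1}(F)\,r + r^n$ whose constant involves the complexity (uniform finiteness) of the family; (ii) proving that bound with uniform constants over the family amounts to exactly the same cell-decomposition-with-bounded-Lipschitz-constants induction that the paper performs directly, so you have not actually saved the inductive work — you have relocated it into an uncited estimate. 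In short, the two reductions (Step 1 for $B_t$, Step 2 for $E_t = A_t\cap\bou(x_t,2r_0)$) are correct granted the tube estimate, but the tube estimate is the substance of the lemma and is left unjustified. The paper's route is cleaner precisely because, after finding the ball inside a cell of $B_t$, it never needs any volume bound on $A_t$ or its boundary: the compatibility of $\mathcal{P}$ with $A$ makes the ball belong to $A_t$ for free. If you want to salvage your argument, you should either prove the uniform tube bound by the same inductive Lipschitz-cell argument as in the paper, or reinstate the compatibility-with-$A$ step, which eliminates the need for Step 2 entirely.
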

\begin{proof} We argue by induction on $n$, the case $n=0$ being vacuous. Fix $b>0$.	
	By \cite[Proposition $1.15$]{kp} or \cite[Theorem $3.1.18$]{livre}, there  is a finite partition $\mathcal{P}$ of $\R^{m+n}$ into definable sets which is compatible with $A$ and $B$ (i.e. these sets are unions of elements of $\mathcal{P}$)  and such that for every $\tim$ and  $C\in \mathcal{P}$ the set $C_t$ is  up to a linear isometry (that we will identify with the identity), a Lipschitz cell (see \cite[Definition $3.1.16$]{livre}) whose Lipschitz constants can be bounded independently of $t$.
		
		Clearly, there must be for every $\tim$  an element $C$ of  $\mathcal{P}$ (depending on $t$) comprised in $B$ and such that  $\hn^n (C_t)\ge \frac{\hn^n (B_t)}{N}$, where $N$ is the number of elements of  $\mathcal{P}$. Since $C_t$ is for each $\tim$  (up to a linear isometry) an $n$-dimensional Lipschitz cell, it is delimited by the respective graphs of two Lipschitz functions $\xi_{1,t}$ and $\xi_{2,t}$ defined on an $(n-1)$-dimensional cell $D_t$ of $\R^{n-1}$ and satisfying $\xi_{1,t}<\xi_{2,t}$.
		Let
		\begin{equation}\label{eq_alpha_1}
			b':=\frac{b}{2N\cdot  \hn^{n-1}( \bou_{\R^{n-1}}(0,a))}, \quad a:=\sup_{x,y\in B_t,\tim} |x-y|,
		\end{equation}
		and, denoting by $\pi_n:\R^n \to \R^{n-1}$ the canonical projection,
		$$C'_t:=\{x\in C_t:  (\xi_{2,t}-\xi_{1,t})(\pi_n(x))\ge b'\} \et C''_t:=C_t\setminus C'_t.$$     
		It follows from the definition of $C'_t$ and $C_t''$  that   for $\tim$ we have 
		$$\hn^n(C_t)\le b' \hn^{n-1}(\pi_n(C''_t))+a \hn^{n-1}(\pi_n(C'_t)), $$
		and therefore, for all $t$ such that $  \hn^n(B_t)\ge b $
		(which  implies $\hn^n (C_t)\ge \frac{b}{N}$) we have
		\begin{equation}\label{ineq_hn_Cprime_t}
			\hn^{n-1}(\pi_n(C'_t))\ge \frac{1}{a}\left(\frac{b}{N}-b' \hn^{n-1}(\pi_n(C''_t))\right)\overset{(\ref{eq_alpha_1})}\ge \frac{b}{2Na} .\end{equation} By induction on $n$, $\pi_n(C'_t)$ contains a ball of radius $\sigma$, for some $\sigma$ independent of $t$, for all $t$ such that $  \hn^n(B_t)\ge b $. Since $(\xi_{2,t}-\xi_{1,t})\ge b'$ on $\pi_n(C'_t)$ and because the Lipschitz constants of $\xi_{1,t}$ and $\xi_{2,t}$ can be bounded independently of $t$, we deduce that $C_t'$ (and hence $B_t$) contains  a ball of radius $\sigma':=\min (\sigma, \frac{b'}{M} )$, with $M$ independent of $t$.
		Now, if $d_\hn(A_t,B_t)<\sigma'$ then this ball   must meet $A_t$, which, since $\mathcal{P}$ is also compatible with $A$, entails that $C$ entirely fits in $A$, which means that $A_t$ contains a ball of radius $\sigma'$. 
	\end{proof}
	\begin{rem}\label{rem_lem_boule_dans_sph}
		The above lemma assumes $A_t\subset \R^n$  and finds a ball $\bou_{\R^n}(x_0(t),\sigma)$. But if $A_t\subset \sph^n$ then we may apply this lemma to find a ball $\bou_{\sph^n}(x_0(t),\sigma)$, for we can cover $\sph^n$ by two charts $h_1:U_1\to \bou_{\R^n}(0,1)$ and $h_2:U_2\to \bou_{\R^n}(0,1)$ and apply the lemma to $h_1(A_t)$ and $h_2(A_t)$, which are subsets of $\R^n$.
	\end{rem}

\begin{lem}\label{lem_boules}
	For every $u\in W^{1,p}(\sph^n)$, $n\ge 1$, $p\in [1,\infty)$, and $\xo$ as well as $x_1$ in $\sph^n$ we have for $\sigma>0$
	\begin{equation}\label{claim_balls}
		||u||_{L^p(\bou_{\sph^n}(x_0,\sigma))}\le ||u||_{L^p(\bou_{\sph^n}(x_1,\sigma))}+4||\nabla u||_{L^p(\sph^n)}	.
	\end{equation}
	\end{lem}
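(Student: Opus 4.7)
The plan is to connect the two balls through a rotation of the ambient sphere. Let $R\in SO(n+1)$ be a rotation carrying $x_0$ to $x_1$ along a shortest great-circle arc, and let $\theta_0:=d_{\sph^n}(x_0,x_1)\le\pi$. Since $R$ is a Euclidean isometry of $\R^{n+1}$, it sends the Euclidean ball $\bou_{\sph^n}(x_0,\sigma)$ onto $\bou_{\sph^n}(x_1,\sigma)$, so the change of variables $y\mapsto R(y)$ gives
$$\|u\circ R\|_{L^p(\bou_{\sph^n}(x_0,\sigma))}=\|u\|_{L^p(\bou_{\sph^n}(x_1,\sigma))},$$
and hence, by the triangle inequality in $L^p$,
$$\|u\|_{L^p(\bou_{\sph^n}(x_0,\sigma))}\le \|u\|_{L^p(\bou_{\sph^n}(x_1,\sigma))}+\|u-u\circ R\|_{L^p(\bou_{\sph^n}(x_0,\sigma))}.$$
Thus the whole problem reduces to estimating the second summand by $4\|\nabla u\|_{L^p(\sph^n)}$.

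To do this, I would first reduce to the case $u\in \cc^\infty(\sph^n)$ by density (which is classical on a compact smooth manifold). Let $(R_t)_{t\in[0,1]}$ be the one-parameter subgroup of rotations with $R_0=\mathrm{Id}$, $R_1=R$, interpolating linearly in angle, and let $X$ denote its Killing generator on $\sph^n$. A direct computation of the angular velocity gives the pointwise bound $|X(z)|\le\theta_0\le\pi$ for every $z\in\sph^n$. Then for each $y\in\bou_{\sph^n}(x_0,\sigma)$,
$$u(R(y))-u(y)=\int_0^1 \nabla u(R_t(y))\cdot X(R_t(y))\,dt.$$

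Applying Minkowski's integral inequality and, for each fixed $t$, changing variables via the Euclidean isometry $R_t$ (whose Jacobian on $\sph^n$ equals $1$ and which maps $\bou_{\sph^n}(x_0,\sigma)$ into $\sph^n$), one gets
\begin{equation*}
\|u-u\circ R\|_{L^p(\bou_{\sph^n}(x_0,\sigma))}\le \int_0^1 \pi\,\Bigl(\int_{\bou_{\sph^n}(x_0,\sigma)}|\nabla u(R_t(y))|^p\,dy\Bigr)^{1/p} dt\le \pi\,\|\nabla u\|_{L^p(\sph^n)},
\end{equation*}
which, since $\pi<4$, yields (\ref{claim_balls}).

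There is no serious obstacle here; the only points that require any care are verifying that the Euclidean (as opposed to geodesic) ball is preserved by $R$ — which it is, since $R$ is an $\R^{n+1}$-isometry — and controlling the norm of the rotational vector field by the total angle $\theta_0$. Density of smooth functions and Minkowski's integral inequality then complete the argument.
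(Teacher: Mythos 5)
Your proof is correct and takes essentially the same route as the paper's: carry $x_0$ to $x_1$ by a rotation $R\in SO(n+1)$ preserving the Euclidean ball, write $u\circ R-u$ as the integral of $\nabla u$ against the generating vector field along the one-parameter rotation, then apply Minkowski's integral inequality and the bound $\theta_0\le\pi<4$. The only cosmetic difference is the parameterization (you integrate over $[0,1]$ with a Killing field of norm $\le\theta_0$, while the paper integrates over $[0,\theta]$ with unit angular speed), together with your explicit mention of the density reduction, which the paper leaves implicit.
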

	\begin{proof}
		Let $P$ denote the plane generated by $\xo$ and $x_1$ and let $\theta\in (0,\pi]$ be the angle between these two vectors (we choose an orientation of $P$). We denote by $h_t:\sph^n\to \sph^n$ the mapping induced by the rotation with angle $t$ in $P$, leaving invariant the vectors that are normal to $P$. Hence, $h_\theta(\xo)=x_1$ and $h_\theta(\bou_{\sph^n}(x_0,\sigma))=\bou_{\sph^n}(x_1,\sigma)$. Since $h_t$ is an isometry for every $t$, it now suffices to write
		\begin{eqnarray*}
			||u\circ h_\theta-u||_{L^p(\bou(x_0,\sigma))}&=&\left(\int_{\bou(x_0,\sigma)}|u\circ h_\theta-u|^p\right)^{1/p}=\left(\int_{\bou(x_0,\sigma)}\left| \int_0^\theta\frac{d (u\circ h_t)}{dt}\right|^p\right)^{1/p}\\
			&\le& \int_0^\theta \left(\int_{\bou(x_0,\sigma)}|\nabla u \circ h_t|^p\right)^{1/p} \le \theta ||\nabla u||_{L^p(\sph^n)},
		\end{eqnarray*}
		yielding (\ref{claim_balls}).
\end{proof}


\begin{lem}\label{lem_h_t}Given a definable $\cc^2$ submanifold $S$ of $\R^n$ there are $\delta\in \D^+(S)$ and  a family of $\cc^1$ diffeomorphisms $h_t:\pi_S^{-1}(t)\cap U_S^\delta \to T_tS^\perp \cap \bou_{\R^n}(0,\delta(t))  $, $t\in S$,  $\cc^1$ with respect to $t$ and satisfying $D_th_t =Id_{T_tS^\perp}$ as well as $|h_t(x)|=|x-t|$, for all $x\in \pi_S^{-1}(t)\cap U_S^\delta$.
	\end{lem}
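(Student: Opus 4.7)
The plan is to take $h_t$ to be simply the translation $x \mapsto x - t$; once this guess is made, all the claimed properties become essentially automatic, and the content of the lemma reduces to producing a single function $\delta \in \D^+(S)$ making the formula well-defined with the right domain and codomain.

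First I would choose $\delta \in \D^+(S)$ such that $U_S^\delta$ is contained in the tubular neighborhood $U_S$ (on which $\pi_S$ is well-defined and $\cc^1$) and such that for every $t \in S$,
\[
\pi_S^{-1}(t) \cap U_S^\delta \;=\; (t + T_tS^\perp) \cap \bou_{\R^n}(t,\delta(t)).
\]
The inclusion $\subset$ is the Lagrange-multiplier condition: for $x \in \pi_S^{-1}(t)$ close to $t$, minimality of $|x - s|$ for $s \in S$ forces $x - t \perp T_tS$. The reverse inclusion requires $\delta(t)$ to be smaller than the reach of $S$ at $t$; the existence of a positive definable continuous $\delta$ with this property follows from the fact that the normal exponential map $(t,v) \mapsto t+v$ is a local $\cc^1$ diffeomorphism from a definable neighborhood of the zero section of the normal bundle of $S$ onto $U_S$, combined with the standard definable construction of strictly positive continuous functions bounded above by a given positive definable quantity (as already used in Lemma \ref{lem_weights}).

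Then I would set $h_t(x) := x - t$ for $x \in \pi_S^{-1}(t) \cap U_S^\delta$. By the identification above, $h_t$ is a bijection onto $T_tS^\perp \cap \bou_{\R^n}(0,\delta(t))$ and, being the restriction of a translation to an affine subspace, is an affine isometry and hence a $\cc^\infty$ diffeomorphism. The joint map $(t,x) \mapsto x - t$ is smooth on its (definable) total domain, which yields the required $\cc^1$ dependence on $t$. The equality $|h_t(x)| = |x - t|$ is immediate. Finally, since $h_t$ is a translation, its differential at the base point $x = t$ equals the identity on the tangent space of its domain at that point, which is exactly $T_tS^\perp$; this gives $D_t h_t = \mathrm{Id}_{T_tS^\perp}$.

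The main (and essentially only) obstacle is the first step: producing a single definable positive continuous $\delta$ for which the identification of fibres is uniform in $t$. Once $\delta$ is in hand, the requirement that $h_t$ preserve $|x-t|$ and have identity derivative at $t$ essentially forces $h_t$ to be the translation $x \mapsto x - t$ (the only alternatives would differ by a $t$-dependent rotation of the normal space, which is ruled out by $D_t h_t = \mathrm{Id}$), and all the remaining verifications are immediate.
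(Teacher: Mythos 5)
Your proposal is correct, and it reaches the same map $h_t$ as the paper, but by a route that makes explicit a structural fact the paper does not spell out. The observation that drives your argument — that the Lagrange-multiplier condition forces $\pi_S^{-1}(t)\subset t+T_tS^\perp$, so that (once $\delta$ is below the local reach and below $d(\cdot,\R^n\setminus U_S)$) the fiber $\pi_S^{-1}(t)\cap U_S^\delta$ is an open disk in the affine normal space — is exactly what allows $h_t$ to be the bare translation $x\mapsto x-t$. The paper instead defines $h_t(x):=\frac{|x-t|}{|P_t(x)-t|}\,(P_t(x)-t)$, where $P_t$ is the restriction to $\pi_S^{-1}(t)$ of the affine orthogonal projection onto $t+T_tS^\perp$, and verifies $D_th_t=\mathrm{Id}_{T_tS^\perp}$ by a direct derivative computation; it then obtains $\delta$ by Definable Choice, without ever invoking flatness of the fiber. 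In other words, the paper's formula is written so as to be valid even if the fiber were curved; under your observation, $P_t$ is the identity on its domain and the paper's expression collapses to $x-t$, so the two constructions coincide. What your version buys is transparency (the rescaling factor disappears and all the asserted properties become tautological); what the paper's version buys is not having to establish, or rely on, the precise identification of the fiber with an affine disk. Both versions require essentially the same definable-choice step to produce a single $\delta\in\D^+(S)$, and neither contains a gap.
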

\begin{proof}
	Fix $t\in S$.
Let us denote by $P_t:\pi_S^{-1} (t) \to T_t S^\perp \times \{t\}$ the restriction to the manifold $\pi_S^{-1} (t) $ of the (affine) orthogonal projection. Notice that then $D_t P_t=Id_{T_t S^\perp}$, and set for $x\in \pi_S^{-1}(t)$ close to $t$
$$h_t(x):=\frac{|x-t|}{|P_t(x)-t|}(P_t(x)-t). $$
A straightforward computation of derivative shows that we also have $D_t h_t=Id_{T_t S^\perp}$, which means that $h_t$ is a local diffeomorphism near $t$. Existence of a definable function $\delta$ then comes from  Definable Choice \cite[Proposition $2.3.1$]{livre}. \end{proof}
\begin{proof}[Proof of (\ref{ineq_vshort})]
			Let us set for $t\in S$ and $r>0$:
	$$V_{r,t}:= \sph(t,r)\cap \pi_S^{-1}(t)\cap \omd .
	$$
We first show that there are a constant $C$ and a function $\delta\in \D^+(S)$ such that  for $u\in W^{1,p}(\omd,  \pa \omd )$, $t\in S$, and $r< \delta(t)$, we have:
	\begin{equation}\label{ineq_vtr0}
				||u||_{L^p(V_{r,t})}\le C r||\nabla u||_{L^p(V_{r,t})}.
	\end{equation} 
The desired statement will ensue after integration of this inequality (see (\ref{ineq_vtr2})).

To simplify notations, and because this will be  the dimension of the ambient space, we set $\nen:=n-k-1$, $k:=\dim S$.
	Assumption (\ref{ass_cone_tangent}) consisting of the disjunction of two inequalities, given $t\in S$, we naturally distinguish two cases for proving the just above inequality.
	
\noindent \underline{\it First case:}  we will focus on the values of $t\in S$ for which \begin{equation}\label{eq_first}
\hn^{n-1}(\sph^{n-1}\setminus \cbt_t(\omd))\ge \alpha.                                                                                     \end{equation}
 Notice that for such $t$, since $\cbf_t(\omd) $ is a cone over $\cbt_t(\omd)$, we must have  $\hn ^{n}(\cbf_t(\omd)\cap \bou(0,1))\ge \alpha/n$, by the coarea formula. But then  by (\ref{eq_trivialite_cone_normal}) we get that
\begin{equation}\label{eq_rond_carre}
	\hn^{n-k} (\cbf_t^S(\omd)\cap \bou(0,1))\ge \frac{\alpha}{ n	\hn^k (\bou(0,1))},\end{equation}
which implies (again, since $\cbf^S_t(\omd) $ is a cone over $\cbt^S_t(\omd)$) that 
\begin{equation}	\label{eq_hyp_S}
 \hn^{\nen} (\cbt_t^S(\omd))\ge\frac{\alpha}{ n	\hn^k (\bou(0,1))}.\end{equation}
Let $h_t:\pi_S^{-1}(t)\cap U_S^\delta \to T_tS^\perp \cap \bou_{\R^n}(0,\delta(t))  $,  $t\in S$, be the homeomorphism given by Lemma \ref{lem_h_t}  and set for $t\in S$ and $r<\delta(t)$
\begin{equation}\label{eq_WAB} W_{r,t}=\sph(t,r)\cap \pi_S^{-1}(t)\setminus \omd, \quad A_{r,t}=\frac{1}{r}\cdot h_t ( W_{r,t}), \quad \mbox{as well as}\quad B_t=\cbt_t^S (\R^n\setminus \omd).\end{equation}
Since we can work up to a definable family of isometries, we will regard $ A_{r,t}$ and $ B_t$, which  are  subsets of $\sph^{n-1}\cap T_t S^\perp$, as  subsets of $\sph^\nen$.
  
 We wish to apply Lemma \ref{lem_contient_boule} to find a ball in $A_{r,t}$ for each $t\in S$ satisfying (\ref{eq_first}) and $r>0$ small, which requires to evaluate $d_\hn(A_{r,t},B_t)$. Let $\sigma$ be given by applying this lemma to the families $A_{r,t}$ and $B_t$, $t\in S$, $r<\delta(t)$, with  $b=\frac{\alpha}{ 	n\hn^k (\bou(0,1))}$ (we here regard $B_t$ as parameterized by $r$, constant with respect to $r$). Since $D_t h_t=Id_{T_tS^\perp}$ and $h_t(t)=0$ (which means that $\Phi_t(x):=h_t(x)-(x-t)$ satisfies $\Phi_t(t)=0$, $D_t \Phi_{t|T_tS^\perp}=0$ and therefore $|\Phi_t(x)|=o(|x-t|)$), there is $\delta\in \D^+(S)$ such that for all $t\in S$ and $x\in \pi^{-1}_S(t)\cap \bou(t,\delta(t))$
 $$|h_t(x)-x+t|=|\Phi_t(x)|\le \frac{\sigma |x-t|}{2},$$
 which entails that for $r< \delta(t)$, we have $d_\hn(A_{r,t},\lbf^S_{r,t}(\R^n\setminus \omd))<\frac{\sigma}{2}$ (since $W_{r,t}$ is merely $\lbf^S_{r,t}(\R^n\setminus \omd)$  shifted by $t$), and consequently:
 \begin{equation}\label{ineq_A_t_B_t}
  d_\hn (A_{r,t},B_t)\le d_\hn(A_{r,t},\lbf^S_{r,t}(\R^n\setminus \omd)) +d_\hn (\lbf^S_{r,t}(\R^n\setminus \omd),B_t)<\frac{\sigma}{2}+d_\hn (\lbf^S_{r,t}(\R^n\setminus \omd),B_t),
 \end{equation}
which, by (\ref{eq_lim_f_t}), implies that if $\delta\in \D^+(S)$ is chosen small enough then $d_\hn(A_{r,t},B_t)<\sigma$ for all $t\in S$ and  $r< \delta(t)$.

 By Lemma \ref{lem_contient_boule} (see Remark \ref{rem_lem_boule_dans_sph})  and (\ref{eq_hyp_S}), $A_{r,t}$ thus must contain a ball of radius $\sigma$ in $\sph^\nen$ for all $t$ satisfying (\ref{eq_first}) (which entails (\ref{eq_hyp_S})) and  $r< \delta(t)$. Lemma \ref{lem_boules} then implies that  for $v\in W^{1,p}(\sph^\nen)$ vanishing on $A_{r,t}$, $t$ satisfying (\ref{eq_first}), and $r< \delta(t)$, we have for all $\xo\in \sph^\nen$
   \begin{equation}\label{eq_sphere_loc1}
  	||v||_{L^p(\bou_{\sph^\nen}(x_0,\sigma))}\le 4||\nabla v||_{L^p(\sph^\nen)},
  \end{equation}
which clearly yields
  \begin{equation}\label{ineq_short_sph}
  	||v||_{L^p(\sph^\nen)}\lesssim ||\nabla v||_{L^p(\sph^\nen)}.
  \end{equation}
 	 Setting  $\tilde{u}_{r,t}(x):=u(r\cdot h_t^{-1}(x))$, $x\in \sph^\nen$ (we recall that $u$ is implicitly extended by $0$), we then can write for  $t$ satisfying (\ref{eq_first}) and $r< \delta(t)$, $\delta \in \D^+(S)$ small enough,
	 	\begin{equation}\label{ineq_vtr}
		||u||_{L^p(V_{r,t})} \lesssim r^\nen  ||\tilde{u}_{r,t}||_{L^p(\sph^\nen)} \overset{(\ref{ineq_short_sph})}{\lesssim}
		r^\nen||\nabla \tilde{u}_{r,t}||_{L^p(\sph^\nen)} \lesssim r||\nabla u||_{L^p(V_{r,t})},
	\end{equation}
	yielding  (\ref{ineq_vtr0}).


 \noindent \underline{\it Second case:} we now are going to deal with the values of $t\in S$ for which \begin{equation}\label{eq_second}
 \hn^{n-2}(\cbt_t(\delta \omd))\ge \alpha.                                                                                     \end{equation} 	 Let $\delta(t)$ and $h_t$ be as in Lemma \ref{lem_h_t}, and set for $t\in S$ and $r<\delta(t)$ (compare with (\ref{eq_WAB}))
 $$ W_{r,t}'=\sph(t,r)\cap \pi_S^{-1}(t)\cap \delta \omd, \quad A_{r,t}'=\frac{1}{r}\cdot h_t (W'_{r,t}),\quad \mbox{as well as} \quad B'_t=\cbt^S_t( \delta \omd).$$
 Like  $A_{r,t}$ and $B_t$,  $A_{r,t}'$ and $B_t'$  will be regarded as  families of subsets of $\sph^\nen$.  Our strategy will be to first prove (\ref{ineq_short_sph}) for $v\in  W^{1,p}(\sph^\nen, A_{r,t,reg}')$, $t\in S$ satisfying (\ref{eq_second}) and $r<\delta(t)$  (see (\ref{eq_sphere_loc}) below) and conclude in the same way via (\ref{ineq_vtr}).
 Note that thanks to Lemma \ref{lem_boules}, it suffices to prove this estimate for each such $r,t$ at some $\xo\in \sph^\nen$ (possibly depending on $r,t$) with a constant independent of $r,t$.
 Since we can require $\sigma<1$, we can use a coordinate system of the sphere.  We will therefore from now regard the families $A_{r,t}'$ and $B_t'$ as families of subsets of $\R^\nen$ and we will show that
 there are a positive real number $\sigma$ and a function $\delta\in \D^+(S)$  such that  for each $t\in S$ satisfying (\ref{eq_second}) and $r\in (0,\delta(t))$ there is $\xo\in \R^\nen$ such that for $v\in W^{1,p}(\bou_{\R^\nen}(0,\sigma),A'_{r,t,reg})$
 \begin{equation}\label{eq_sphere_loc} ||v||_{L^p(\bou_{\R^\nen}(\xo,\sigma))} \lesssim ||\nabla v||_{L^p(\bou_{\R^\nen}(\xo,\sigma))}, \end{equation}
 where the constant will be bounded independently of such $r,t$.

  To prove (\ref{eq_sphere_loc}), notice that  the same argument word for word as in (\ref{eq_rond_carre}) and (\ref{eq_hyp_S}) (applied to $\delta \omd$ instead of $\R^n\setminus\omd$) yields (\ref{eq_hyp_S})  for $\delta \omd$, i.e. for each $t\in S$ satisfying (\ref{eq_second})
 \begin{equation}	\label{eq_hyp_S2}
 	\hn^{\nen-1} (\cbt_t^S(\delta \omd))\ge\frac{\alpha}{ n	\hn^k (\bou(0,1))}.\end{equation}
 By Theorem $1.3.2$ of \cite{paramreg}, there is a definable family of bi-Lipschitz homeomorphisms $H_{r,t}:\R^\nen \to \R^\nen, t\in S, r<\delta(t)$, such that $H_{r,t}(A'_{r,t})$ and $H_{r,t}(B_t')$ can, for each $t\in S$ and $r<\delta(t)$, be included in the union of finitely many graphs of definable Lipschitz functions $\xi_{i,r,t} :\R^{\nen-1} \to \R$, $i=1,\dots, l$. Moreover, the Lipschitz constants of these functions as well as the Lipschitz constants of $H_{r,t}$ and $H_{r,t}^{-1}$ can be bounded independently of $r,t$.  Since we can work up to a bi-Lipschitz map for each $r,t$, we will identify $A'_{r,t}$ and $B_t'$ with their images, assuming that they are contained in the graphs of the $\xi_{i,r,t}$'s.

 Let $\C$ be a cell decomposition of $ \R^{1+n+\nen}$ compatible with $$ \{(r,t,x)\in (0,+\infty)\times S\times \R^\nen:x\in A'_{r,t}\} \et \{(r,t,x)\in (0,+\infty)\times S\times \R^\nen:x\in B_t'\}.$$ Clearly, the cells of $\C_{r,t}$  (see Remark \ref{rem_fibres_cell_dec} for $\C_{r,t}$) that are included in $A'_{r,t}$ or $B_t'$ are graphs of functions induced by the $\xi_{i,r,t}$'s.
   Note  that for each $t\in S$ and $r>0$ there must be $C\in \C$ (depending on $r,t$)  such that $C_{r,t}\subset B_t'$  and $\hn^{\nen-1} (C_{r,t}) \ge \frac{\hn^{\nen-1} (B_t')}{N}$,  where $N$ is the number of cells of $\C$. By
 (\ref{eq_hyp_S2}), this means that for each $t\in S$ satisfying (\ref{eq_second}) and $r<\delta(t)$ we have $\hn^{\nen-1} (C_{r,t}) \ge\frac{\alpha}{nN\hn^k(\bou(0,1))} $, which, since $C_{r,t}$ is included in the union of the graphs of the $\xi_{i,r,t}$'s, implies that  $\hn^{\nen-1} (\pi_\nen(C_{r,t}))\ge \frac{\alpha}{\kappa nN\hn^k(\bou(0,1))} $ (here $\pi_\nen:\R^\nen\to \R^{\nen-1}$ is the canonical projection) for some constant $\kappa$ that just depends on the Lipschitz constants of the $\xi_{i,r,t}$'s.

 By Lemma \ref{lem_boules}, we deduce that for each  $t$ satisfying (\ref{eq_second}) and $r<\delta(t)$, $\pi_\nen(C_{r,t})$ contains a ball $\bou_{\R^{\nen-1}}(a(r,t),\sigma)$ for some $\sigma>0$ (independent of $t$ and $r$)  and some $a(r,t)\in \R^{\nen-1}$.  The argument we used in (\ref{ineq_A_t_B_t})  applies word for word (just replacing  $\R^n\setminus \omd$ with $\delta \omd$) to show that $d_\hn (A_{r,t}', B_t')<\sigma$ for all $r< \delta(t)$, for some $\delta\in \D^+(S)$, which entails that $\pi_\nen(A'_{r,t})$ must meet $ \pi_\nen(C_{r,t})$ for such $r,t$. Since $\C_{r,t}$ is for each $r,t$ a cell decomposition  compatible with $A_{r,t}'$,  this implies that $\pi_{\nen}(C_{r,t})$ entirely fits  in $\pi_\nen (A_{r,t}')$.
 Hence,  $A_{r,t}'$ is above $\bou_{\R^{\nen-1}}(a(r,t),\sigma)$ the graph of a  Lipschitz function (induced by the  $\xi_{i,r,t}$'s) whose Lipschitz constant can be bounded independently of $r,t$.
 We thus can apply classical arguments for proving Poincar\'e inequality on Lipschitz domains. As well-known, the constant of this inequality can be bounded in terms of the Lipschitz constant of the domain and the radius of the ball on which the function is defined. This yields (\ref{eq_sphere_loc}).

 Now, thanks to Lemma \ref{lem_boules}, (\ref{eq_sphere_loc}) implies (\ref{ineq_short_sph}) for  $v\in  W^{1,p}(\sph^\nen, A_{r,t,reg}')$, and we may write (\ref{ineq_vtr}) to complete the proof of  (\ref{ineq_vtr0}) in the second  case.
 

To prove (\ref{ineq_vshort}), it then suffices to integrate (\ref{ineq_vtr0}). Namely, since $\pi_S$ is the identity on $S$ and $T_x V^\delta _r= (x-\pi_S(x))^\perp\supset T_{\pi_S(x)} S$ for all $x\in V^\delta_r$, there is $\delta\in \D^+(S)$ such that $\jac \pi_{S|V_r^\delta} (x)$ is bounded below away from zero,  which entails
 \begin{equation}\label{ineq_vtr2}
 	||u||_{L^p(V^\delta_r)}^p\overset{(\ref{eq_coarea})}{\lesssim} \int_{S}  ||u||^p_{L^p(V_{r,t})}   dt\overset{(\ref{ineq_vtr0})}{\lesssim} r^p\int_{S}  ||\nabla u||^p_{L^p(\frac{1}{r}\cdot V_{r,t})}  dt\overset{(\ref{eq_coarea})}{\lesssim} r^p||\nabla u||_{L^p(V^\delta_r)}^p,
 \end{equation}
 where the constant is independent of $r$.
\end{proof}

\begin{proof}[Proof of (\ref{ineq_vshort_reg})] 
	 Set $g_\nu(x)=\nu x+(1-\nu)\pi_S(x)$, $\nu\in [0,1]$, $x\in U_S$, and notice that, by Minskowski's integral inequality, we have for $\delta \in \D^+(S)$, $u\in W^{1,p}(\omd,\pa \omd)$, and $\eta>0$:
\begin{equation*}
		||u||_{L^p(V_\eta^\delta)}=\left(\int_{V_\eta^\delta}\left| \int_0^1\frac{d (u\circ g_\nu)}{d\nu}d\nu\right|^p\right)^{1/p}\\
		\le \int_0^1 \left(\int_{V_\eta^\delta}|\nabla u \circ g_\nu|^p\cdot \left|\frac{\pa g_\nu}{\pa \nu}\right|^p\right)^{1/p}d\nu.	\end{equation*}
 Since $ |\frac{\pa g_\nu}{\pa \nu}(x)|\le \eta$ when $d(x,S)=\eta $, this implies
		\begin{equation*}	||u||_{L^p(V_\eta^\delta)}	\le \eta\int_0^1 \left(\int_{V_\eta^\delta}|\nabla u \circ g_\nu|^p\right)^{1/p}d\nu
			\le\eta \left(\int_0^1\int_{V_\eta^\delta}|\nabla u \circ g_\nu|^p\,d\nu\right)^{1/p}\quad \mbox{(by H\"older)}.	\end{equation*}
		As $S$ is an $(n-1)$-dimensional smooth submanifold of $\R^n$, for each $\delta \in \D^+(S)$ small enough, $D_x\pi_S:T_x V^\delta_\eta \to T_{\pi_S(x)}S$ is close  to the identity for all $x\in V_\eta^\delta$, for all $\eta>0$. Hence, for $\delta$ small enough, $g_\nu$ induces an embedding of $V^\delta _\eta$ into $V^\delta _{\nu\eta}$ such that $\jac g_{\nu|V^\delta_\eta}(x) \ge \frac{1}{2}$. Applying the coarea formula (\ref{eq_coarea}) in the right-hand-side of the above inequality, we get
				\begin{equation*}	 ||u||_{L^p(V_\eta^\delta)}	 \le2\eta  \left(\int_0^1\int_{V_{\nu\eta}^\delta}|\nabla u(x) |^pdx\,d\nu\right)^{1/p}\quad  \\
		\le 2\,\eta^{1-\frac{1}{p}} \, \left(\int_0^\eta\int_{V_{\zeta}^\delta}|\nabla u(x) |^pdx\,d\zeta\right)^{1/p},
\end{equation*}
where we have set $\zeta(\nu):=\nu\eta$. Applying again  the coarea formula  (with now the function $d(\cdot,S)$) gives the desired inequality.
\end{proof}

\end{document}